\documentclass[leqno,11pt]{amsart}
\usepackage[utf8]{inputenc}
\usepackage{graphicx}
\usepackage{amscd}
\usepackage{amsmath}
\usepackage{amssymb}
\usepackage{caption}
\usepackage{amsfonts}
\usepackage{amssymb}
\usepackage{mathrsfs}
\usepackage{multicol}
\usepackage{color}
\usepackage{todonotes} 

\usepackage{esint} 

\usepackage[noadjust]{cite}

\usepackage[left=2.5cm, right=2.5cm, top=2cm, bottom=2cm]{geometry}
\usepackage{enumitem,graphicx}
\usepackage[colorlinks=true,urlcolor=blue,
citecolor=red,linkcolor=blue,linktocpage,pdfpagelabels,
bookmarksnumbered,bookmarksopen]{hyperref}

\newcommand{\wt}{\widetilde}

\newcommand{\eps}{\varepsilon}

\newcommand\supp{\mathrm{supp}\,}
\newcommand\lan{\langle}
\newcommand\ran{\rangle}

\newcommand\N{\mathbb{N}}
\newcommand\R{\mathbb{R}}


\newcommand\C{\mathbb{C}}

\newcommand{\cC}{{\mathcal C}}
\newcommand{\cD}{{\mathcal D}}
\newcommand{\cE}{{\mathcal E}}

\newcommand{\cM}{{\mathcal M}}

\newcommand{\cQ}{{\mathcal Q}}

\newcommand{\cU}{{\mathcal U}}

\newcommand{\cX}{{\mathcal X}}

\newcommand{\weakto}{\rightharpoonup}

\newcommand{\tu}{\widetilde{u}}

\newcommand{\loc}{\mathrm{loc}}

\newenvironment{altproof}[1]
{\noindent
	{\em Proof of {#1}}.}
{\nopagebreak\mbox{}\hfill $\Box$\par\addvspace{0.5cm}}

\newtheorem{theorem}{Theorem}[section] 
\newtheorem{lemma}[theorem]{Lemma}
\newtheorem{remark}[theorem]{Remark}

\newtheorem{proposition}[theorem]{Proposition}


\usepackage{mathrsfs}

\allowdisplaybreaks
\definecolor{darkgreen}{rgb}{0.09, 0.45, 0.27}
\definecolor{debianred}{rgb}{0.84, 0.04, 0.33}
\allowdisplaybreaks[1]

\numberwithin{equation}{section}

\begin{document}

\title[Traveling waves for nonlinear Schr\"odinger equations]
{Traveling waves for nonlinear Schr\"odinger equations}
	\author[L. Baldelli]{Laura Baldelli}
	\author[B. Bieganowski]{Bartosz Bieganowski}
	\author[J. Mederski]{Jaros\l aw Mederski}

	\address[L. Baldelli]
	{\newline\indent 
			IMAG, Departamento de An\`alisis Matem\`atico,
			\newline\indent 
			Universidad de Granada,
			\newline\indent 
			Campus Fuentenueva, 18071 Granada, Spain}
	\email{\href{mailto:labaldelli@ugr.es}{labaldelli@ugr.es}}

\address[B. Bieganowski]{\newline\indent
			Faculty of Mathematics, Informatics and Mechanics, \newline\indent
			University of Warsaw, \newline\indent
			ul. Banacha 2, 02-097 Warsaw, Poland}	
			\email{\href{mailto:bartoszb@mimuw.edu.pl}{bartoszb@mimuw.edu.pl}}	
 
	\address[J. Mederski]
	{\newline\indent 
			Institute of Mathematics,
			\newline\indent 
			Polish Academy of Sciences,
			\newline\indent 
			ul. \'Sniadeckich 8, 00-656 Warsaw, Poland}
	\email{\href{mailto:jmederski@impan.pl}{jmederski@impan.pl}}

\begin{abstract}
We look for traveling wave solutions to the nonlinear Schr\"odinger equation with a subsonic speed, covering  several physical models with Sobolev subcritical nonlinear effects. Our approach is based on a variant of Sobolev-type inequality involving the momentum and we show the existence of its minimizers solving the nonlinear Schr\"odinger equation.

\bigskip
 {\small
{\bf MSC 2010:} Primary: 35J20, 35J10, 35C07.
	
{\bf Key words:} Traveling waves, Schr\"odinger equation, Gross-Pitaevskii equation, Poho\v{z}aev constraint, concentration compactness, non-vanishing.
}

\end{abstract}
	
	\maketitle

\section*{Introduction}
\setcounter{section}{1}

In this paper, we study the existence of nontrivial finite energy  traveling waves moving with subsonic speed for a class of nonlinear Schr\"odinger equations of the type
\begin{equation}\label{pphi}
i\partial_t \Phi =\Delta\Phi+F(|\Phi|^2)\Phi\quad \hbox{in }\R^d\times\R,
\end{equation}
where $d\ge3$, $\Phi:\R^d\times \R\to\C$ satisfies the "boundary condition" $|\Phi|\to r_0$ as $|x|\to\infty$, with $r_0>0$ and $F$ is a real-valued function on $\R^+$ with $F(r_0^2)=0$.
If \eqref{pphi} is defocusing, that is if $F'(r_0^2)<0$, a simple scaling, see \cite{maris}, enables us to assume $r_0=1$ and $F'(r_0^2)=-1$, so that the sound velocity at infinity associated to \eqref{pphi} is $v_s=r_0\sqrt{-2F'(r_0^2)}=\sqrt{2}$. 
Equations of this type appear in mathematical
physics and have been derived as models of several physical phenomena such as superconductivity, superfluidity in Hellium II, Bose-Einstein condensates as well as dark optical solitons \cite{Abitetal,Kivshar,jpr}.

The Gross-Pitaevskii equation is a particular case of \eqref{pphi} with $F(s)=1-s$ and it was deeply studied in the literature, see \cite{br,maris,BethuelSautI,BethuelSautII,Gravejat,marisNonEx} and references therein. Recall that nontrivial finite energy traveling waves for $F(s)=1-s$ in dimension $d=1$ are unique up to rotation and translation and are given by
$$\Phi_c(x)=\sqrt{\frac{2-c^2}{2}}\tanh\Big(\frac{\sqrt{2-c^2}}{2}x\Big)+i\frac{c}{\sqrt{2}}$$
for $c\in (0,\sqrt{2})$. $\Phi_c$ are called {\em dark solitons}, $\Phi(x,t)=\Phi_c(x-ct)$ solves \eqref{pphi} and observe that $|\Phi(x,t)|\to 1$ as $|x|\to\infty$.

In the present paper, we are focused on {\em traveling wave} solutions which are solutions to \eqref{pphi} of the form
\begin{equation}\label{ansatz}
\Phi(x,t)=1+u(x_1-ct,y)
\end{equation}
with $u:\R^d\to\C$, $x=(x_1,y)\in\R\times\R^{d-1}$, $d\geq 2$ and $c\in\R^+$ describes the {\em speed}.
By the ansatz \eqref{ansatz} the equation for the profile $u$ is given by
\begin{equation}
\label{eq}
-ic\partial_{x_1}u+\Delta u+(1+u)F(|1+u|^2)=0\quad \hbox{in }\R^d.
\end{equation}
The study of finite energy traveling solutions for \eqref{pphi} has a long story. In the particular case of the Gross-Pitaevskii equation, this problem was first studied in the papers by Putterman and Roberts \cite{jpr}, where formal calculations and numerical analysis gave rise to a set of conjectures regarding existence, asymptotic behavior and stability of finite energy traveling waves: the so-called Jones-Putterman-Roberts program.
In particular, the existence of finite energy traveling waves is expected if and only if the propagation velocity is subsonic, namely if $c\in(0, \sqrt{2})$.
In the last years, much progress has been made to give rigorous proofs of those conjectures. Nonexistence of nontrivial finite energy traveling waves for supersonic speed was proved by Gravejat in \cite{Gravejat}, for general nonlinearities see \cite{marisNonEx}.
Concerning the asymptotics of finite energy solutions, for any $d\ge2$, finite energy solutions of \eqref{eq} converge at infinity to a fixed complex number of modulus $1$. 
We recall the existence results for small values of $c$, see \cite{BethuelSautI} in dimension 2
and \cite{bos} in dimension 3, but a complete existence result in the sub-sonic case remained for many years as a standing open problem. Finally, Mari\c{s} proved in \cite{maris} the existence
result for any subsonic speed in dimension $d \ge3$. His approach is, summing up, to minimize the Lagrangian under a Poho\v{z}aev-type constraint. Once this is accomplished, Mari\c{s} proves that
the corresponding Lagrange multiplier is $0$, concluding the proof. This approach works also for more general nonlinearities with nonvanishing conditions at infinity, such as cubic-quintic nonlinearity. As commented in \cite{maris}, this minimization approach breaks
down in dimension $2$ because of different scaling properties: the infimum is $0$ and is never attained. Moreover Chiron and Mari\c{s} \cite{ChironMaris} have considered also travelling waves with at the fixed momentum.
Another approach is due to Bellazzini and Ruiz \cite{br} and is based on the approximation domains and result of Mari\c{s} in dimension 2 and 3 were recovered. Let us also mention that Mari\c{s} and Mur \cite{marisMur} have recently considered periodic traveling waves in $\R^2$.

The aim of this work is to consider more general growth assumptions at infinity concerning $F$ in comparison to \cite{maris} in case $d\geq 4$, and provide a direct and simpler variational approach based on a Sobolev-type inequality involving the momentum, see \eqref{eq:ineq} below. Moreover, in contrary to  \cite{maris}, we no longer require regularity of minimizers on the Poho\v{z}aev manifold to solve \eqref{eq} (cf. Remark \ref{rem:final}), and we show that only the nonvanishing analysis  is sufficient to establish the existence of travelling waves instead of the full concentration-compactness.  We also point out that, under condition (F2), the bootstrap argument is no longer available.

Concerning the nonlinearity $F:\R\to\R$ we consider the following general assumptions:
\begin{itemize}
	\item[(F1)] $F\in \cC[0,\infty)$ and $\cC^1$ in a neighborhood of $1$, $F(1)=0$ and $F'(1)=-1$,
	\item[(F2)] $\limsup_{s\to\infty}|F(s)|/s^{\frac{2^*-2}{2}}=0$,
\end{itemize}
where $2^*=\frac{2d}{d-2}$ is the critical Sobolev exponent.
In particular, (F2) is in the spirit of Berestycki-Lions \cite{BerLions}.

Letting $V(s):=\int_s^1F(t)\,dt$ for $s \geq 0$, we can define the {\em energy} and the Lagrangian of the problem consider, respectively as
\begin{align*}
\cE(u)&=\frac12\int_{\mathbb R^d} |\nabla u|^2+V(|1+u|^2)\,dx,\quad u\in\cD^{1,2}(\R^d), \\
E_c(u)&=\cE(u)+c\cQ(u)=\frac12\int_{\mathbb R^d} |\nabla u|^2+V(|1+u|^2)\,dx+c\cQ(u),\quad u\in\mathcal{X},
\end{align*}
where $\cD^{1,2}(\R^d)$ stands for the completion of $\cC_0^{\infty}(\R^d)$ with respect to the norm
$\|u\|=|\nabla u|_2$,
\begin{equation}\label{X}
\cX:=\big\{u\in \cD^{1,2}(\R^d):\;  |1+u|-1\in L^2(\R^d)\big\},
\end{equation}
and $\cQ(u)$ is the first component of the {\em momentum} defined as
$$\cQ(u)=\frac12\int_{\mathbb R^d}\langle i\partial_{x_1}u,u\rangle\,dx.$$
Here and in what follows $|\cdot|_k$ denotes the $L^k$-norm, $k\in [1,\infty]$ and $ \lan f,g \ran=\Re(f)\Re(g)+\Im(f)\Im(g)$, with $\Re(\cdot), \Im(\cdot)$ the real and the imaginary part, respectively. Observe that, in view of (F1) and (F2), $\cE(u)<\infty$ if and only if $u\in\cX$.
It is clear that $\cQ$ is a well defined functional in $H^1(\R^d)$, but in general  there exist functions $u\in\cX\setminus H^1(\R^d)$ such that $\langle i\partial_{x_1}u,u\rangle\notin L^1(\R^d)$. Therefore  a proper definition of the momentum in $\cX$ was proposed in \cite[Section 2]{maris} and  then $|\cQ(u)|<\infty$ for $u\in\cX$.

Now we state the {\em Sobolev-type inequality} involving the momentum $\cQ$ for $d\geq 4$ as follows. Let $S_c\in\R$ be the largest possible constant such that  
\begin{equation}\label{eq:ineq}
\left(\int_{\R^d}|\nabla_y u|^2\, dx\right)^{\frac{d-1}{d-3}}\geq -S_c\left(\frac{1}{2}\int_{\R^d}|\partial_{x_1} u|^2+V( |1+u|^2)\,dx+c\cQ(u)\right)
\end{equation}
holds for all $u\in\cX$, where $x = (x_1, y)$, $y \in \R^{d-1}$.

Our main result can be summarized as follows.
\begin{theorem}\label{thm:1} Assume that (F1), (F2) hold and let $c\in (0,\sqrt{2})$, $d\geq 4$. There holds.\\
(a) $S_{c}>0$. \\
(b)
If $(u_n)\subset\cX$ is a sequence such that
$$
\frac{ \left(\int_{\R^d}|\nabla_y u_n|^2\, dx \right)^{\frac{d-1}{d-3}}}{\frac{1}{2}\int_{\R^d}|\partial_{x_1} u_n|^2 +V(|1+u_n|^2)\,dx+c\cQ(u_n) } \to -S_c.
$$
as $n\to\infty$,
then there is $(x_n)\subset \R^d$ such that, passing to a subsequence, 
$(u_n(\cdot+x_n))$ is weakly convergent in $\cD^{1,2}(\R^d)$ to a minimizer $u$ of \eqref{eq:ineq}, i.e. the equality in \eqref{eq:ineq} holds.\\
(c) If $u$ is a minimizer of \eqref{eq:ineq}, then (up to some rescaling)  $u$ is a weak solution to \eqref{eq}. 
\end{theorem}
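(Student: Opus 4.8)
I would exploit the anisotropic scaling that renders \eqref{eq:ineq} homogeneous. Set $p:=\tfrac{d-1}{d-3}$ (so $p>1$ precisely because $d\ge 4$), $T(u):=\int_{\R^d}|\nabla_y u|^2\,dx$ and $\cA(u):=\tfrac12\int_{\R^d}\big(|\partial_{x_1}u|^2+V(|1+u|^2)\big)\,dx+c\,\cQ(u)$, so that \eqref{eq:ineq} reads $T(u)^p\ge -S_c\,\cA(u)$ and $S_c=\inf\{T(u)^p/(-\cA(u)):u\in\cX,\ \cA(u)<0\}$, the infimum over a nonempty set (a direct test–function construction). The dilation $u_\tau(x_1,y):=u(x_1,y/\tau)$, $\tau>0$, obeys $T(u_\tau)=\tau^{d-3}T(u)$ and $\cA(u_\tau)=\tau^{d-1}\cA(u)$; hence the quotient $R:=T^p/(-\cA)$ is dilation–invariant and each orbit $\{u_\tau\}_{\tau>0}$ meets $\{\cA=-1\}$ exactly once, so minimizing $R$ over $\{\cA<0\}$ is equivalent to minimizing $T$ over $\{\cA=-1\}$ with $S_c=(\inf_{\cA=-1}T)^p$, and the normalization in (b) states exactly that $(u_n)$ is a minimizing sequence for this problem. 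Finally $E_c(u)=\tfrac12 T(u)+\cA(u)$, so \eqref{eq} is the Euler--Lagrange equation of $E_c$.

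For (a), by the scaling it suffices to bound $\cA(u)$ from below on $\{T(u)=1\}$. From $F(1)=0$, $F'(1)=-1$, Taylor's formula gives $V(s)=\tfrac12(s-1)^2+o((s-1)^2)$ near $s=1$, hence $\tfrac12 V(|1+u|^2)\ge(\Re u)^2-r(u)$, where $r(u)$ together with the contribution of $\tfrac12\int_{\R^d}V(|1+u|^2)\,dx$ from the region where $|1+u|$ stays away from $1$ is controlled, via (F2) and the Sobolev embedding $\cD^{1,2}(\R^d)\hookrightarrow L^{2^*}(\R^d)$, by $o(1)\,|\nabla u|_2^{2^*}+C$. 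A Plancherel computation shows that the quadratic form $Q_c(u):=\tfrac12|\partial_{x_1}u|_2^2+|\Re u|_2^2+c\,\cQ(u)$ is nonnegative exactly when $c^2\le 2$, and for $c^2<2$ controls $|\partial_{x_1}u|_2^2+|\Re u|_2^2$ up to a piece supported at low $x_1$-frequencies, which is in turn dominated by $T(u)$. Combining these, using $c^2<2$ and $T(u)=1$ and absorbing the $o(1)\,|\nabla u|_2^{2^*}$ term, yields $\cA(u)\ge-C_0$; both the subsonic and the subcritical hypotheses enter here.

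For (b), let $(u_n)\subset\cX$ with $T(u_n)\to S_c^{1/p}$ and $\cA(u_n)\to-1$. The coercivity of (a) gives a uniform bound $|\nabla u_n|_2+\big| |1+u_n|-1\big|_2\le C$. I would then establish non-vanishing: if $\sup_{\xi\in\R^d}\int_{B(\xi,1)}|u_n|^2\to 0$, then, using the local $H^1$-bound on $|1+u_n|-1$ and (F2), all nonlinear and localized parts of $\cA(u_n)$ vanish in the limit while the subsonic nonnegativity of $Q_c$ forces $\li\cA(u_n)\ge 0$, contradicting $\cA(u_n)\to-1$. So after translating by suitable $x_n$ and passing to a subsequence, $u_n\weakto u\ne 0$ in $\cD^{1,2}(\R^d)$, and $u_n\to u$ a.e. shows $u\in\cX$. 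Put $v_n:=u_n-u\weakto 0$ (after a routine truncation ensuring $v_n\in\cX$). Weak convergence gives $T(u_n)=T(u)+T(v_n)+o(1)$ and $\cQ(u_n)=\cQ(u)+\cQ(v_n)+o(1)$, while a Brezis--Lieb argument about the background $1$ (using $v_n\to 0$ a.e., $|1+v_n|-1$ bounded in $L^2$, and (F2)) gives $\int_{\R^d}V(|1+u_n|^2)\,dx=\int_{\R^d}V(|1+u|^2)\,dx+\int_{\R^d}V(|1+v_n|^2)\,dx+o(1)$; hence $\cA(u_n)=\cA(u)+\cA(v_n)+o(1)$. Applying \eqref{eq:ineq} to $u$ and to $v_n$ and using the strict superadditivity $(a+b)^p>a^p+b^p$ for $a,b>0$ (valid since $p>1$), along a subsequence
\[
S_c=\lim\big(T(u)+T(v_n)\big)^p\ge T(u)^p+\lim T(v_n)^p\ge -S_c\big(\cA(u)+\lim\cA(v_n)\big)=-S_c\lim\cA(u_n)=S_c,
\]
so equality holds throughout. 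Since $u\ne 0$ forces $T(u)>0$ (a $\cD^{1,2}(\R^d)$-function independent of $y$ is $0$), the strict superadditivity forces $T(v_n)\to 0$; hence $-S_c\lim\cA(v_n)=\lim T(v_n)^p=0$, so $\cA(u)=\lim\cA(u_n)-\lim\cA(v_n)=-1$, while $T(u)=\lim T(u_n)-\lim T(v_n)=S_c^{1/p}$. Thus $T(u)^p=S_c=-S_c\cA(u)$: equality holds in \eqref{eq:ineq} and $u$ is a minimizer.

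For (c), let $u$ be a minimizer of \eqref{eq:ineq}; discarding $u=0$ and dilating along its orbit I may assume $\cA(u)<0$, so $T(u)>0$. As $\cA$ is $\cC^1$ on $\cX$ and $\cA'(u)\ne 0$ (otherwise differentiating $\cA(u_\tau)=\tau^{d-1}\cA(u)$ at $\tau=1$ gives $(d-1)\cA(u)=0$), there is $\mu\in\R$ with $T'(u)=\mu\,\cA'(u)$ in $\cD'(\R^d)$, i.e.
\[
-2\Delta_y u=\mu\big(-\partial_{x_1}^2 u-(1+u)F(|1+u|^2)+c\,i\partial_{x_1}u\big).
\]
Differentiating $\log R=p\log T-\log(-\cA)$ at the minimizer identifies $\mu=-T(u)/\big(p(-\cA(u))\big)<0$. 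Dividing the displayed identity by $\mu$ and writing $-2/\mu=\tau^{-2}$ with $\tau:=\sqrt{-\mu/2}$ turns it into $\partial_{x_1}^2 u+\tau^{-2}\Delta_y u+(1+u)F(|1+u|^2)-ic\,\partial_{x_1}u=0$; substituting $\tilde u:=u_\tau$ (which multiplies the $\Delta_y$-term by the compensating $\tau^{-2}$, the other terms merely precomposing with $(x_1,y)\mapsto(x_1,y/\tau)$) gives
\[
\partial_{x_1}^2\tilde u+\Delta_y\tilde u+(1+\tilde u)F(|1+\tilde u|^2)-ic\,\partial_{x_1}\tilde u=0\quad\text{in }\cD'(\R^d),
\]
which is \eqref{eq}; the nonlinearity lies in $L^1_{\loc}(\R^d)$ by (F2) and $\tilde u\in L^{2^*}_{\loc}(\R^d)$, so $\tilde u$ is a weak solution, with no regularity theory or further rescaling needed. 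The main obstacle is part (a) together with the two analytic inputs of (b): the sharp ($c^2<2$) coercivity of $Q_c$ combined with the subcritical control (F2) of the non-quadratic remainder, and the Brezis--Lieb splitting of $\int_{\R^d}V(|1+\cdot|^2)\,dx$ around the nonzero background together with the non-vanishing lemma in $\cX$; once these are available the superadditivity step in (b) and the Lagrange-multiplier/dilation argument in (c) are soft.
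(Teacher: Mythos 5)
Your overall architecture is viable and in places genuinely different from the paper's (which projects minimizing sequences onto the Poho\v{z}aev manifold $\cM$, where $E_c(u)=\frac{1}{d-1}\int_{\R^d}|\nabla_y u|^2\,dx$, proves non-vanishing via Lemma \ref{lem:vanishing}, and derives the Euler--Lagrange equation by comparing directional difference quotients through Proposition \ref{sob}, with no multiplier rule and no Brezis--Lieb splitting). However, your proof of (a) --- the heart of the theorem --- has a genuine gap. First, the quadratic form $Q_c(u)=\tfrac12|\partial_{x_1}u|_2^2+|\Re u|_2^2+c\,\cQ(u)$ is not defined on $\cX$: membership in $\cX$ only gives $|1+u|-1\in L^2(\R^d)$, and for instance $u=e^{i\theta}-1$ with $\theta\in\cD^{1,2}(\R^d)\setminus L^4(\R^d)$ (possible for $d\geq 5$) has $\Re u\simeq -\theta^2/2\notin L^2(\R^d)$; likewise $\cQ$ on $\cX\setminus H^1(\R^d)$ is only defined through the renormalization of \cite[Section 2]{maris}, so the integration by parts behind your ``Plancherel computation'' is unavailable. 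The usable substitute is the lifting $1+u=\rho e^{i\theta}$ of Lemma \ref{lem:lifting}, which requires $|1+u|$ uniformly close to $1$ and hence the regularization machinery of Lemma \ref{lem3.1}. Second, the absorption step fails: on $\{T(u)=1\}$ the quantities $|\partial_{x_1}u|_2$ and $\big||1+u|-1\big|_2$ are unconstrained, and under the borderline growth (F2) the best bound for the non-quadratic part of $\int V$ is of the form $\varepsilon|\nabla u|_2^{2^*}+C_\varepsilon(\cdots)$; since $2^*>2$, for fixed $\varepsilon$ the term $\varepsilon\big(|\partial_{x_1}u|_2^2+1\big)^{2^*/2}$ dominates the only positive term $\tfrac12|\partial_{x_1}u|_2^2$ as $|\partial_{x_1}u|_2\to\infty$, so no bound $\cA\geq -C_0$ on $\{T=1\}$ comes out of this computation. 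This is precisely why the paper proves instead the \emph{local} coercivity \eqref{eqCompEnergies} ($E_c\geq\varepsilon E_{GL}$ for small $E_{GL}$), deduces $T_c=\inf_{\cM}E_c>0$, and only then obtains \eqref{eq:ineq} from the scaling identity of Proposition \ref{sob}.

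In (b), the two splittings you call routine are not. After integration by parts the cross term in $\cQ(u_n)-\cQ(u)-\cQ(v_n)$ reduces to $2\int_{\R^d}\langle i\partial_{x_1}u,v_n\rangle\,dx$, and $v_n\weakto 0$ in $\cD^{1,2}(\R^d)$ gives boundedness only in $L^{2^*}(\R^d)$, not in $L^2(\R^d)$, so this term need not vanish for a general $\partial_{x_1}u\in L^2(\R^d)$; similarly, the Brezis--Lieb identity for $\int V(|1+\cdot|^2)$ under (F2) requires the measure-theoretic argument carried out at the end of the proof of Lemma \ref{lem:vanishing} (the sets $\{\,||1+u_n|-1|>\delta\,\}$ must be shown to have small measure). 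The non-vanishing step hides the same difficulty: converting vanishing of $\sup_{y}\int_{B(y,1)}|u_n|^2\,dx$ into $\liminf\cA(u_n)\geq 0$ again needs the regularized functions $v_{h_n}$ to control $\cQ$. Part (c) is correct in spirit, but the appeal to ``the Lagrange multiplier rule on $\cX$'' should be replaced by an explicit two-parameter variation (or the paper's comparison of difference quotients against Proposition \ref{sob}), since $\cX$ carries no Banach-manifold structure for which that rule is off the shelf. In short: the plan is coherent and partly distinct from the paper's, but as written (a) does not close, and the key analytic inputs of (b) are asserted rather than proved.
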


Problem~\eqref{eq} was solved in~\cite[Theorem~1.1]{maris} under a slightly more restrictive growth condition at infinity, namely
$
\limsup_{s\to\infty}|F(s)|/s^{p_0}=0
$
for some \( p_0 < \frac{2^*-2}{2} \). This assumption was crucial, in particular, for carrying out the bootstrap argument.
We present a different technique to prove Theorem \ref{thm:1}, which is based on a variant of Lions' lemma  based on the nonvanishing property from \cite{maris} -- see Lemma \ref{lem:vanishing} below, and on the Sobolev-type inequality. The variational approach seems to be simpler and more direct.

If $d=3$, then we no longer have a Sobolev-type inequality, however, recall that in \cite[Theorem 6.2]{maris} a minimizer of $E_c$ on the Poho\v{z}aev manifold 
$$\cM=\left\{u\in\cX\setminus\{0\} \ : \  \frac{1}{2}\int_{\R^3}|\partial_{x_1} u_n|^2 +V(|1+u_n|^2)\,dx+c\cQ(u_n)=0\right\}$$
has been found, which solves \eqref{eq}, however it is not clear whether our approach can be applied in $\R^3$.

In what follows we state the following Lions' type lemma (cf. \cite{LionsAnn1}) which will be useful to prove the non-triviality of the solution given in the main result of the paper.

First, we introduce the Ginzburg-Landau energy as follows. Let $\varphi \in \cC^\infty (\R)$ be an odd function that $\varphi(s) = s$ for $s \in [0,2]$, $\varphi(s) = 3$ for $s \geq  4$, and $0 \leq \varphi'(s) \leq 1$ for $s \in \R$. Then, for $u \in \cX$ we set
$$
E_{GL} (u) := \frac12 \int_{\R^d} |\nabla u|^2 \, dx + \frac14 \int_{\R^d} \left( \varphi^2( |1+u| ) - 1 \right)^2 \, dx.
$$
Note that the function space $\mathcal{X}$  in \eqref{X}, as observed by \cite{maris}, for $d\ge3$ satisfies
$$\mathcal{X}=\{u\in \cD^{1,2}(\R^d) \ : \ \varphi^2(|1+u|)-1\in L^2(\R^d)\}=\{u\in  \cD^{1,2}(\R^d) \ : \ E_{GL}(u)<\infty\}.$$
Observe that for $d=3$, $\mathcal{X}$ is not a vector space.
Moreover, if $d\ge3$, then $H^1(\R^d)\subset\mathcal{X}$ and the Lagrangian $E_c(u)$ is well-defined for $u\in\mathcal{X}$.
While, for $d \in \{3,4\}$, $\mathcal{X}$ can be also written as
$$\mathcal{X}=\{u\in \cD^{1,2}(\mathbb R^d) \ : \ |1+u|^2-1\in L^2(\mathbb R^d)\},$$
see also \cite{Gerard}.

\begin{lemma}\label{lem:vanishing}
Assume $d\ge 3$ and let $(u_n)\subset \mathcal{X}$ be a sequence such that $(E_{GL}(u_n))$ is bounded and
\begin{equation}\label{eq:vanishing2}
\lim_{n\to\infty}\sup_{y\in\R^d}\int_{B(y,1)}|u_n|^2+\left(\varphi^2(|1+u_n|) - 1\right)^2\,dx=0.
\end{equation}
Then $\liminf_{n\to\infty}E_c(u_n) = \liminf_{n\to\infty} E_{GL}(u_n) + c \mathcal{Q}(u_n) \geq 0$ for any $c \in (0,\sqrt{2})$.
\end{lemma}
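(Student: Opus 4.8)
The plan is to split the statement into the identity $\liminf_n E_c(u_n)=\liminf_n\big(E_{GL}(u_n)+c\cQ(u_n)\big)$ and the bound $\liminf_n\big(E_{GL}(u_n)+c\cQ(u_n)\big)\ge 0$. Since $E_c=\cE+c\cQ$, the first reduces to showing $\cE(u_n)-E_{GL}(u_n)\to 0$. The engine for both parts is the following observation. Put $\eta_n:=\varphi^2(|1+u_n|)-1$. Boundedness of $E_{GL}(u_n)$ gives a uniform $L^2(\R^d)$‑bound on $\eta_n$, while $|\nabla\eta_n|=2\varphi(|1+u_n|)\varphi'(|1+u_n|)\big|\nabla|1+u_n|\big|\le 6|\nabla u_n|$ gives a uniform $L^2$‑bound on $\nabla\eta_n$; hence $(\eta_n)$ is bounded in $H^1(\R^d)$. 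By \eqref{eq:vanishing2} we have $\sup_y\int_{B(y,1)}\eta_n^2\,dx\to 0$, so the classical lemma of Lions \cite{LionsAnn1} yields $\eta_n\to 0$ in $L^q(\R^d)$ for every $q\in(2,2^*)$. In particular, for each fixed $\de>0$ the ``bad set'' $\Om_n^\de:=\{x\in\R^d:\big||1+u_n(x)|-1\big|\ge\de\}$ satisfies $|\Om_n^\de|\to 0$, since $|\eta_n|$ is bounded below by a positive constant on $\Om_n^\de$.

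For the identity I would write $\cE(u_n)-E_{GL}(u_n)=\int_{\R^d}\big(\tfrac12 V(|1+u_n|^2)-\tfrac14\eta_n^2\big)\,dx$. Given $\eps>0$, assumption (F1) (equivalently $V(1)=V'(1)=0$, $V''(1)=1$) furnishes $\de=\de(\eps)>0$ such that $\big|\tfrac12 V(|1+u_n|^2)-\tfrac14\eta_n^2\big|\le\eps\,\eta_n^2$ on $\R^d\setminus\Om_n^\de$, so the contribution of this set is at most $\eps\int\eta_n^2\le\eps\,C$. On $\Om_n^\de$ the term $\tfrac14\eta_n^2$ is bounded, contributing at most $16|\Om_n^\de|\to 0$; and using (F2) in the form $|V(s)|\le C_{\eps'}+\eps'\,s^{d/(d-2)}$ together with $|1+u_n|\le C_\de|u_n|$ on $\Om_n^\de$ and the Sobolev inequality, $\int_{\Om_n^\de}|V(|1+u_n|^2)|\,dx\le C_{\eps'}|\Om_n^\de|+\eps'\,C_\de\,|\nabla u_n|_2^{2^*}$. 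Letting $n\to\infty$, then $\eps'\to 0$, then $\eps\to 0$ gives $\cE(u_n)-E_{GL}(u_n)\to 0$, hence $\liminf_n E_c(u_n)=\liminf_n\big(E_{GL}(u_n)+c\cQ(u_n)\big)$.

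For the inequality I would pass to the amplitude–phase description: away from the set where $1+u_n$ vanishes one writes $1+u_n=\rho_n e^{i\theta_n}$ with $\rho_n=|1+u_n|$, so that $\langle i\pa_{x_1}(1+u_n),1+u_n\rangle=\rho_n^2\,\pa_{x_1}\theta_n$; after the regularisation of the momentum from \cite[Section 2]{maris} this leads to $2\cQ(u_n)=\int_{\R^d}(\rho_n^2-1)\,\pa_{x_1}\theta_n\,dx$ modulo terms negligible along the sequence. Then for any $\al>0$ Young's inequality gives $c\,\cQ(u_n)\ge-\tfrac{c\al}{4}\int_{\R^d}\eta_n^2\,dx-\tfrac{c}{4\al}\int_{\R^d}(\pa_{x_1}\theta_n)^2\,dx-\cR_n$, while $E_{GL}(u_n)\ge\tfrac14\int_{\R^d}\eta_n^2\,dx+\tfrac12\int_{\R^d}\rho_n^2(\pa_{x_1}\theta_n)^2\,dx\ge\tfrac14\int_{\R^d}\eta_n^2\,dx+\tfrac{(1-\de)^2}{2}\int_{\R^d}(\pa_{x_1}\theta_n)^2\,dx-\cR_n'$, the loss $\cR_n'$ being localised where $|1+u_n|$ is not close to $1$. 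Adding the two and choosing $\al$ in the interval $\big(\tfrac{c}{2(1-\de)^2},\tfrac1c\big)$ — which is nonempty exactly because $c<\sqrt2$ (the sound speed), provided $\de$ is small in terms of $c$ — makes both coefficients $\tfrac14-\tfrac{c\al}{4}$ and $\tfrac{(1-\de)^2}{2}-\tfrac{c}{4\al}$ nonnegative, whence $E_{GL}(u_n)+c\cQ(u_n)\ge-\cR_n-\cR_n'$. It then remains to verify $\cR_n,\cR_n'\to 0$, which follows from $|\Om_n^\de|\to 0$, $\eta_n\to 0$ in $L^q$, and the boundedness of $E_{GL}(u_n)$; this gives $\liminf_n\big(E_{GL}(u_n)+c\cQ(u_n)\big)\ge 0$.

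The main obstacle is the last step of the second part: making the amplitude–phase computation rigorous on all of $\cX$ and showing every remainder is $o(1)$. The phase $\theta_n$ need not be globally defined, $|1+u_n|$ may be small on a genuinely nonempty set (where $\pa_{x_1}\theta_n$ need not be square integrable), and $\cQ$ on $\cX$ is only given through the regularisation of \cite{maris}; controlling the contribution of $\{|1+u_n|\text{ far from }1\}$ — where $\rho_n^2-1$ and the momentum density can be large — and of $\{|1+u_n|\text{ small}\}$ is where one must combine the smallness of $|\Om_n^\de|$ with the $\varphi$‑truncation built into $E_{GL}$ and with the structural properties of $\cX$ and of $\cQ$ established in \cite{maris}. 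The remaining steps are bookkeeping, and the only quantitatively sharp point — the threshold $c=\sqrt2$ — enters solely through the width of the admissible interval for $\al$.
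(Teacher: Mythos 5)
Your first half---showing $\cE(u_n)-E_{GL}(u_n)\to 0$---is essentially sound and close in spirit to the paper's final step: the paper also splits according to whether $\big||1+u_n|-1\big|$ is small or large, uses a Taylor estimate near $1$ (Lemma \ref{lem:est1}) and a growth estimate \eqref{eq:est3} far from $1$, and proves $|\{\,||1+u_n|-1|>\delta\,\}|\to 0$ (via Lieb's lemma rather than your application of the classical Lions lemma to $\eta_n=\varphi^2(|1+u_n|)-1$; both routes work, and your $H^1$-bound on $\eta_n$ is correct).

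The second half, however, contains a genuine gap, and it is exactly the one you flag yourself and then dismiss as ``bookkeeping.'' The lifting $1+u_n=\rho_n e^{i\theta_n}$ with $\theta_n\in\cD^{1,2}$ and the identity $2\cQ(u_n)=-\int_{\R^d}(\rho_n^2-1)\partial_{x_1}\theta_n\,dx$ (Lemma \ref{lem:lifting}) are only available when $|1+u_n|$ is uniformly pinched between two positive constants; for a general $u_n\in\cX$ the set where $1+u_n$ is small or vanishes is nonempty, $\partial_{x_1}\theta_n$ need not be locally square integrable there, the momentum density need not be in $L^1$, and $\cQ$ itself is only defined through the regularisation of \cite[Section 2]{maris}. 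Smallness of $|\Omega_n^\delta|$ does not by itself produce the remainders $\cR_n,\cR_n'\to 0$: you never exhibit an integrable majorant for the momentum density on the bad set, nor a globally defined phase. The paper's proof is built precisely to avoid this: it replaces $u_n$ by minimizers $v_n$ of the penalised functional $G^{u_n}_{h_n}$ from Lemma \ref{lem3.1}, uses elliptic regularity to get uniform local H\"older continuity of $v_n$, and combines this with the vanishing hypothesis \eqref{eq:vanishing2} to prove $\big\||1+v_n|-1\big\|_\infty\to 0$; only then does the lifting apply \emph{globally} to $v_n$, yielding $E_{GL}(v_n)+c\cQ(v_n)\ge 0$ by the same Young-inequality computation you propose, after which the estimates \eqref{3.2} and \eqref{3.4} transfer the conclusion back to $u_n$. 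This regularisation step is the core of the argument, not a routine verification, so as written your proof of the inequality $\liminf_n\big(E_{GL}(u_n)+c\cQ(u_n)\big)\ge 0$ is incomplete. (Your quantitative point about the admissible interval for $\alpha$ being nonempty exactly when $c<\sqrt 2(1-\delta)$ is correct and matches the paper's estimate $\sqrt2(1-\delta)|\cQ(v_n)|\le E_{GL}(v_n)$.)
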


\begin{remark}
Recall that the classical version of the Lions' lemma \cite{LionsAnn1} states that if $(u_n)$ is bounded in $H^1(\R^d)$ and $\lim_{n\to\infty}\sup_{y\in\R^d}\int_{B(y,1)}|u_n|^2\,dx=0,$
then 
\begin{equation}\label{eq:LionsLp}
u_n\to 0\quad\hbox{in }L^p(\R^d)
\end{equation}
for $2<p<2^*$,  \cite{LionsAnn1}. Observe that, in a similar way as in Lemma \ref{lem:vanishing}, we can replace \eqref{eq:LionsLp} by the equivalent condition 
$\liminf_{n\to\infty}J(u_n)\geq 0$, where 
$$J(u):=\frac12\int_{\R^d}|\nabla u|^2+|u|^2\,dx-\frac1p\int_{\R^d}|u|^p\,dx$$ is the energy functional associated to the problem 
$-\Delta u+u=|u|^{p-2}u$. Indeed,
note that if $(u_n)$ is such a sequence as above and \eqref{eq:LionsLp} holds, then clearly $\liminf_{n\to\infty}J(u_n)\geq 0$. On the other hand, suppose that $(u_n)$ is  such a sequence as above, but $|u_n|_p$ is bounded away from $0$ passing a subsequence. Then 
 $t_n^{p-2}:=(|\nabla u_n|_2^2+|u_n|_2^2)/|u_n|_p^p$  is bounded and bounded away from $0$. Observe that $J'(t_n u_n)(t_n u_n) = 0$,  $(p^{1/(p-2)} t_nu_n)$  is bounded in $H^1(\R^d)$, and
$\lim_{n\to\infty}\sup_{y\in\R^d}\int_{B(y,1)}|p^{1/(p-2)}t_nu_n|^2\,dx=0.$ Then
 $$J(p^{1/(p-2)} t_nu_n)=-\frac{p^{2/(p-2)}}{2}(|\nabla (t_nu_n)|_2^2+|t_nu_n|_2^2)\leq -c<0$$
 for a constant $c>0$, which contradicts the statement  $\liminf_{n\to\infty}J(p^{1/(p-2)} t_nu_n)\geq 0$.
\end{remark}

In what follows $\lesssim$ denotes the inequality up to a multiplicative constant. Moreover $C > 0$ denotes a generic constant that may vary from one line to another.

The paper is organized as follows. In Section  \ref{lions} we prove Lions' type Lemma \ref{lem:vanishing} and in
Section \ref{th1} we present our variational approach and prove Theorem \ref{thm:1}.

\section{A variant of Lions' lemma}\label{lions}

We need the following lifting lemma.

\begin{lemma}[{\cite[Lemma 2.1]{maris}}] \label{lem:lifting}
Let $u \in \cX$ satisfy $m \leq |1+u(x)| \leq 2$ a.e. on $\R^d$ for some $m \in (0,2)$. Then, there are functions $\rho, \theta : \R^d \rightarrow \R$ such that $\rho -1 \in H^1 (\R^d)$, $\theta \in \cD^{1,2} (\R^d)$ and 
$$
1+u = \rho e^{i\theta} \quad \mbox{a.e. on } \R^d
$$
and
$$
\cQ(u) = - \frac12 \int_{\R^d} (\rho^2-1) \frac{\partial \theta}{\partial x_1} \, dx.
$$
\end{lemma}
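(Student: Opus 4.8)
The plan is to produce the polar decomposition by first fixing the modulus and then constructing the phase as a potential for the natural ``phase one–form.'' Write $w:=1+u$, $\rho:=|w|$, $g:=w/\rho$. The hypothesis $m\le|w|\le 2$ gives $\varphi(|w|)=|w|$, so the membership $u\in\cX$ yields $\rho^2-1=|w|^2-1\in L^2(\R^d)$; since $\rho+1$ is bounded and bounded away from $0$, also $\rho-1=(\rho^2-1)/(\rho+1)\in L^2(\R^d)$, while $\nabla\rho=\nabla(\rho^2)/(2\rho)$ with $\nabla(\rho^2)=2\langle w,\nabla w\rangle\in L^2$ and $\rho\ge m$ gives $\nabla\rho\in L^2$. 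Hence $\rho-1\in H^1(\R^d)$, and it remains to build $\theta\in\cD^{1,2}(\R^d)$ with $w=\rho e^{i\theta}$.

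The candidate for $\nabla\theta$ is the field $\Theta:=\langle iw,\nabla w\rangle/|w|^2\in L^2(\R^d;\R^d)$ (its components lie in $L^2$ because $w$ is bounded, $\nabla w\in L^2$ and $|w|\ge m$); equivalently $\Theta=\langle ig,\nabla g\rangle$ with $g:\R^d\to S^1$. The heart of the argument—and the step I expect to be the main obstacle—is to show that $\Theta$ is exact in $\cD^{1,2}$. First I would prove $\curl\Theta=0$ in $\mathcal{D}'(\R^d)$. For any $\C$–valued $H^1_{\loc}$ map $v$ the quantity $\langle i\partial_j v,\partial_k v\rangle$ is algebraically antisymmetric in $(j,k)$ while $\langle iv,\partial_j\partial_k v\rangle$ is symmetric, so for smooth $v$ one has $\partial_j\langle iv,\partial_k v\rangle-\partial_k\langle iv,\partial_j v\rangle=2\langle i\partial_j v,\partial_k v\rangle$. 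Applying this to the mollifications $g_n:=g*\eta_{1/n}$ (which satisfy $|g_n|\le1$ and $g_n\to g$ in $H^1_{\loc}$) and letting $n\to\infty$, the left side converges in $\mathcal{D}'(\R^d)$ to $\curl\Theta$ and the right side converges in $L^1_{\loc}$ to $2\langle i\partial_j g,\partial_k g\rangle$. The constraint $|g|\equiv1$ gives $\langle g,\partial_j g\rangle=0$ a.e., so $\partial_j g$ is a real multiple of $ig$ a.e., whence $\langle i\partial_j g,\partial_k g\rangle=0$ a.e.\ and $\curl\Theta=0$. Since $\R^d$ is simply connected, a curl–free $L^2$ field is a gradient: solving $\Delta\theta=\div\Theta$ by the Fourier multiplier $\widehat\theta=-i\,\xi\cdot\widehat\Theta/|\xi|^2$ and using the curl–free relation $\xi_j\widehat\Theta_k=\xi_k\widehat\Theta_j$ gives $\nabla\theta=\Theta$, and $\theta\in\cD^{1,2}(\R^d)$ by the Sobolev embedding.

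Next I would verify $w=\rho e^{i\theta}$. Decomposing $\partial_k w$ in the orthogonal frame $\{w,iw\}$ and using $\langle w,\partial_k w\rangle=\rho\,\partial_k\rho$ and $\langle iw,\partial_k w\rangle=\rho^2\Theta_k$, one gets $\partial_k w=(\partial_k\rho/\rho)\,w+\Theta_k\,iw$. Setting $h:=w e^{-i\theta}\in H^1_{\loc}(\R^d)$ then gives $\partial_k h=e^{-i\theta}(\partial_k w-i\Theta_k w)=(\partial_k\rho/\rho)\,h$, so $\partial_k(h/\rho)=0$ for every $k$ and $h/\rho$ is a unimodular constant on the connected set $\R^d$. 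Choosing the unique $\cD^{1,2}$ representative of $\theta$, both $\rho-1$ and $e^{-i\theta}-1$ vanish at infinity, so $h=we^{-i\theta}\to1$ and $\rho\to1$; hence $h/\rho\equiv1$, i.e.\ $w=\rho e^{i\theta}$, as claimed.

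Finally, the momentum formula follows by inserting the decomposition into the regularized definition of $\cQ$ from \cite[Section 2]{maris}. Using $\langle i\partial_{x_1}w,w\rangle=-\rho^2\partial_{x_1}\theta$ and $\langle i\partial_{x_1}w,1\rangle=-\partial_{x_1}(\Im u)$, we obtain $\langle i\partial_{x_1}u,u\rangle=-\rho^2\partial_{x_1}\theta+\partial_{x_1}(\Im u)$; the pure derivatives $\partial_{x_1}\theta$ and $\partial_{x_1}(\Im u)$ integrate to zero, leaving $\cQ(u)=-\tfrac12\int_{\R^d}(\rho^2-1)\,\partial_{x_1}\theta\,dx$. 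The point that makes this rigorous is precisely that, although $\langle i\partial_{x_1}u,u\rangle$ need not be integrable, the product $(\rho^2-1)\,\partial_{x_1}\theta$ lies in $L^1$ as a product of two $L^2$ functions (by the first step and $\nabla\theta\in L^2$), which is consistent with the regularized meaning of $\cQ$ on $\cX$; matching the two definitions completes the proof.
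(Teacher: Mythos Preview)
The paper does not prove this lemma at all: it is quoted directly from \cite[Lemma~2.1]{maris} and used as a black box. There is therefore no ``paper's own proof'' to compare against, only Mari\c{s}'s original argument, which your outline largely reconstructs.

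Your approach is sound. A couple of places deserve tightening. First, the step ``$h/\rho$ is a unimodular constant, and since $\rho-1$ and $e^{-i\theta}-1$ vanish at infinity, $h/\rho\equiv 1$'' is heuristic; a clean way to pin down the constant is to observe that $h-1=(e^{-i\theta}-1)+u\,e^{-i\theta}\in L^{2^*}(\R^d)$ (using $|e^{-i\theta}-1|\le|\theta|$ and $u,\theta\in L^{2^*}$), while $h-1=c(\rho-1)+(c-1)$ with $\rho-1\in H^1\subset L^{2^*}$, forcing the constant $c-1\in L^{2^*}$ and hence $c=1$. Second, the line ``the pure derivatives $\partial_{x_1}\theta$ and $\partial_{x_1}(\Im u)$ integrate to zero'' is exactly the formal step that fails for general $u\in\cX$ (neither function is in $L^1$), and is the reason a regularized momentum is needed in the first place. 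You correctly flag this, but the argument should not pass through the naive integrand $\langle i\partial_{x_1}u,u\rangle$ at all; instead one inserts the lifting into the regularized definition of $\cQ$ from \cite[Section~2]{maris} and uses that $(\rho^2-1)\partial_{x_1}\theta\in L^1$ to identify the result. With those two adjustments your proof is complete.
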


Now, in order to prove Lemma \ref{lem:vanishing}, we recall the regularization procedure introduced in \cite{Gerard} and developed in \cite{maris}, which holds also in our more general setting.
For fixed $u\in \mathcal{X}$ and $h>0$, define
$$G(v) :=G^u_{h}(v):=E_{GL}(v)+\frac{1}{2 h^2}\int_{\R^d} \varphi(|v-u|^2) \, dx.$$
The next lemma gives the properties of functions that minimize $G$ in the space $$H^1_u(\R^d):=\{v\in \mathcal{X} : v-u\in H^1(\R^d)\}.$$

\begin{lemma}[{\cite[Lemma 3.1]{maris}}]\label{lem3.1}
Assume $d\ge3$. Then the following properties hold.
\begin{itemize}
\item[(i)] The functional $G$ has a minimizer in $H^1_u(\R^d)$.
\item[(ii)] Let $v_h$ be a minimizer of $G$ in $H^1_u(\R^d)$. There exists a constant $C_1 > 0$, depending only on d, such that $v_h$ satisfies
\begin{equation*}
E_{GL}(v_h)\le E_{GL}(u)
\end{equation*}
\begin{equation}\label{3.2}
|v_h-u|^2_2\le h^2E_{GL}(u)+C_1\bigl(E_{GL}(u)\bigr)^{1+2/d}h^{4/d}
\end{equation}
\begin{equation*}
\int_{\R^d}\bigl|(\varphi^2(|1+u|)-1)^2-(\varphi^2(|1+v_h|)-1)^2\bigr| \, dx  \lesssim hE_{GL}(u)
\end{equation*}
\begin{equation}\label{3.4}
|\cQ(u)-\cQ(v_h)| \lesssim \left(h^2+(E_{GL}(u))^{2/d}h^{4/d}\right)^{1/2}E_{GL}(u)
\end{equation}
\item[(iii)] For $z\in\mathbb C$, denote $H(z)=(\varphi^2(|z+1|)-1)\varphi(|z+1|)\varphi'(|z+1|)\frac{z+1}{|z+1|}$ if $z\neq-1$ and $H(-1) = 0$. Then any minimizer $v_h$ of $G$ in $H^1_u(\R^d)$ satisfies the equation
\begin{equation*}
-\Delta v_h+H(v_h)+\frac{1}{h^2}\varphi'(|v_h-u|^2)(v_h-u)=0\quad \text{in}\,\, \mathcal{D}'(\R^d).
\end{equation*}
Moreover, $v_h \in W^{2,p}_{\loc} (\R^d)$ for $p\in[1,\infty)$; in particular, $v_h\in \cC^{1,\alpha}_{\loc} (\R^d)$ for $\alpha\in[0, 1)$.
\item[(iv)] For any $h > 0$, $\delta > 0$, there exists a constant $K=K(d, h, \delta) > 0$ such that for any $u\in\mathcal{X}$ with $E_{GL}(u) \le K$ and for any minimizer $v_h$ of $G$ in $H^1_u(\R^d)$, there holds
\begin{equation*}
1-\delta<|1+v_h(x)|<1+\delta\qquad\text{for}\,\, x \in \R^d.
\end{equation*}
\end{itemize}
\end{lemma}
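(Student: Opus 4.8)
The four assertions are established in turn. Throughout I write $g(z):=(\varphi^2(|1+z|)-1)^2$, so that $E_{GL}(v)=\frac12\int_{\R^d}|\nabla v|^2\,dx+\frac14\int_{\R^d}g(v)\,dx$, and $w:=v_h-u$. For (i) the plan is the direct method on the affine space $H^1_u(\R^d)=u+H^1(\R^d)$. Since $u$ itself is admissible, $\inf G\le G(u)=E_{GL}(u)<\infty$. For a minimizing sequence $(v_n)$, the bound on $E_{GL}(v_n)$ controls $\|\nabla v_n\|_2$ and $\int g(v_n)\,dx$, while the penalty gives $\int_{\R^d}\varphi(|v_n-u|^2)\,dx\lesssim h^2$. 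Because $\varphi(s)=s$ on $[0,2]$ and $\varphi(s)\ge 2$ for $s\ge2$, this bound yields both $\int_{\{|w_n|^2\le 2\}}|w_n|^2\,dx\lesssim h^2$ and a bound on $|\{|w_n|^2>2\}|$; combining the latter with $\|w_n\|_{2^*}\lesssim\|\nabla w_n\|_2$ through H\"older gives $\int_{\{|w_n|^2>2\}}|w_n|^2\,dx\lesssim|\{|w_n|^2>2\}|^{2/d}\|\nabla w_n\|_2^2<\infty$. Hence $(w_n)$ is bounded in $H^1(\R^d)$, and after passing to a subsequence $w_n\weakto w$ in $H^1$, $v_n\to v:=u+w$ a.e. Weak lower semicontinuity of $\|\nabla\cdot\|_2^2$ and Fatou's lemma applied to the nonnegative integrands $g(v_n)$ and $\varphi(|w_n|^2)$ give $G(v)\le\liminf_n G(v_n)=\inf G$, with $v\in H^1_u(\R^d)$; thus $v$ is a minimizer.

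For (ii), the inequality $G(v_h)\le G(u)$ and the nonnegativity of the penalty term immediately give $E_{GL}(v_h)\le E_{GL}(u)$ and, after rearranging, $\frac{1}{2h^2}\int_{\R^d}\varphi(|w|^2)\,dx\le E_{GL}(u)$. The same split as in (i), now quantitative, yields \eqref{3.2}: the region $\{|w|^2\le 2\}$ contributes at most $h^2E_{GL}(u)$, while on $\{|w|^2>2\}$ H\"older and Sobolev give $\int|w|^2\,dx\lesssim|\{|w|^2>2\}|^{2/d}\|\nabla w\|_2^2\lesssim(h^2E_{GL}(u))^{2/d}E_{GL}(u)$, using $\|\nabla w\|_2^2\lesssim E_{GL}(u)$. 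For the third estimate write $g(u)-g(v_h)=(\sqrt{g(u)}-\sqrt{g(v_h)})(\sqrt{g(u)}+\sqrt{g(v_h)})$; since $s\mapsto\varphi^2(s)-1$ is globally Lipschitz, $|\sqrt{g(u)}-\sqrt{g(v_h)}|\lesssim\big|\,|1+u|-|1+v_h|\,\big|\le|w|$, so Cauchy--Schwarz together with $\int(\sqrt{g(u)}+\sqrt{g(v_h)})^2\,dx\lesssim E_{GL}(u)$ and the $L^2$-bound \eqref{3.2} yield the stated estimate, the exponents being balanced by \eqref{3.2}. Finally \eqref{3.4} follows from the continuity of the momentum established in \cite[Section 2]{maris} (via the lifting of Lemma \ref{lem:lifting}), which furnishes a bilinear control of $\cQ(u)-\cQ(v_h)$ by $E_{GL}(u)^{1/2}\|w\|_2$; inserting \eqref{3.2} reproduces the right-hand side of \eqref{3.4}.

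For (iii), since $v_h$ minimizes $G$ over $u+H^1(\R^d)$ its first variation vanishes: for every $\psi\in\cC_0^\infty(\R^d;\C)$ one has $\frac{d}{dt}G(v_h+t\psi)\big|_{t=0}=0$. Differentiating the three terms identifies the gradient of $\frac14 g(\cdot)$ with $H(\cdot)$ and the gradient of the penalty with $\frac{1}{h^2}\varphi'(|v_h-u|^2)(v_h-u)$, which gives the stated equation in $\mathcal{D}'(\R^d)$. For regularity, $H$ is bounded (as $\varphi,\varphi'$ are bounded and $\varphi^2-1$ is bounded on the support of $\varphi'$), and $\varphi'(|v_h-u|^2)(v_h-u)$ is bounded as well, since $\varphi'(s)=0$ for $s\ge 4$ forces this quantity to vanish where $|v_h-u|>2$. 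Hence $\Delta v_h\in L^\infty(\R^d)$, interior $W^{2,p}$ estimates give $v_h\in W^{2,p}_{\loc}(\R^d)$ for all $p<\infty$, and $v_h\in\cC^{1,\alpha}_{\loc}(\R^d)$ by Morrey's embedding.

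The main obstacle is (iv): upgrading the smallness of $E_{GL}(u)$ to a pointwise two-sided bound on $|1+v_h|$ that is uniform in $x$. I would exploit the translation invariance of the equation. Since $\|\Delta v_h\|_\infty\le C(h)$ and $\sup_{x_0}\|v_h\|_{L^{2^*}(B(x_0,1))}\le\|v_h\|_{L^{2^*}(\R^d)}\lesssim\|\nabla v_h\|_2\lesssim E_{GL}(u)^{1/2}$, the interior estimates of (iii) hold with constants independent of the ball centre, giving a uniform $\cC^{1,\alpha}$ bound on $v_h$. On the other hand $E_{GL}(v_h)\le E_{GL}(u)\le K$ forces $\int_{\R^d}g(v_h)\,dx$ small, and where $|1+v_h|\le 2$ one has $g(v_h)=(|1+v_h|^2-1)^2$, so $\big\|\,|1+v_h|^2-1\,\big\|_{L^2}$ is small. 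A function with a uniform H\"older bound and a uniformly small local $L^2$ norm is uniformly small, by the interpolation $\|f\|_\infty\lesssim\|f\|_2^{1-\beta}[f]_{\cC^\alpha}^\beta$ on unit balls; a short bootstrap then rules out the set $\{|1+v_h|>2\}$ once $K$ is small enough, yielding $\big|\,|1+v_h|-1\,\big|<\delta$ with $K=K(d,h,\delta)$. I expect the genuinely delicate points to be this uniform-in-centre elliptic estimate together with the interpolation/bootstrap, and the momentum continuity invoked for \eqref{3.4}, whose proof hinges on the nonstandard definition of $\cQ$ on $\cX$ rather than on a naive $L^2$ pairing (which fails, since elements of $\cX$ need not lie in $L^2(\R^d)$).
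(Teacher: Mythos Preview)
The paper does not contain a proof of this lemma: it is quoted from \cite[Lemma~3.1]{maris} and used as a black box, so there is no in-paper argument to compare your proposal against. Your outline is a reasonable reconstruction of what one expects to find in \cite{maris}: the direct method for (i), the comparison $G(v_h)\le G(u)$ together with a splitting of $\{|w|^2\le2\}$ versus $\{|w|^2>2\}$ for (ii), the first variation plus elliptic bootstrapping for (iii), and translation-invariant $W^{2,p}$ estimates combined with an $L^2$--H\"older interpolation for (iv).

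Two places where the sketch is loose. For the third inequality in (ii), your Cauchy--Schwarz route yields $\int_{\R^d}|g(u)-g(v_h)|\,dx\lesssim|w|_2\,E_{GL}(u)^{1/2}$, and inserting \eqref{3.2} produces $hE_{GL}(u)+h^{2/d}E_{GL}(u)^{1+1/d}$ rather than $hE_{GL}(u)$ alone; the second term is not absorbed by the first without an extra assumption (such as $h\le1$, which is in fact the regime used later in the paper), so the phrase ``the exponents being balanced by \eqref{3.2}'' is not accurate as written. For \eqref{3.4}, the control of $\cQ(u)-\cQ(v_h)$ genuinely relies on the nonstandard definition of $\cQ$ on $\cX$ and the lifting identity in \cite[Section~2]{maris}; you flag this correctly, but the actual estimate requires more than a generic ``bilinear control'' and more than Lemma~\ref{lem:lifting} alone (the latter only applies once $|1+v|$ is already bounded away from zero). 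These are refinements rather than gaps in strategy; if you want the full details, they are in \cite{maris}.
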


\begin{lemma} \label{lem:est1}
The assumption (F1) implies that for every $\varepsilon > 0$ there is $\delta = \delta(\varepsilon)$ such that
$$
\left| V(s^2) - \frac{1}{2} \left( s^2 - 1 \right)^2 \right| \leq \varepsilon \left( s^2 - 1 \right)^2
$$
for any $1-\delta \leq s \leq 1 + \delta$.
\end{lemma}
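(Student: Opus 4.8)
The plan is to deduce the inequality from the first-order Taylor expansion of $F$ at $1$ guaranteed by (F1). Since $F$ is of class $\mathcal{C}^1$ on some neighbourhood $I$ of $1$ with $F(1)=0$ and $F'(1)=-1$, the function $R(t):=F(t)+(t-1)$ satisfies $R(t)/(t-1)\to 0$ as $t\to 1$; equivalently, for every $\eta'>0$ there is $\eta\in(0,1)$ with $[1-\eta,1+\eta]\subset I$ such that $|R(t)|\le \eta'\,|t-1|$ whenever $|t-1|\le\eta$.

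Next I would integrate this identity. By the definition $V(\sigma)=\int_\sigma^1 F(t)\,dt$, for $\sigma\in[1-\eta,1+\eta]$ one gets
$$V(\sigma)=\int_\sigma^1\bigl(-(t-1)\bigr)\,dt+\int_\sigma^1 R(t)\,dt=\frac{(\sigma-1)^2}{2}+\int_\sigma^1 R(t)\,dt,$$
and specializing to $\sigma=s^2$ yields $V(s^2)-\tfrac12(s^2-1)^2=\int_{s^2}^1 R(t)\,dt$. Thus the assertion reduces to estimating this remainder integral.

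Then, given $\varepsilon>0$, I would apply the first step with $\eta':=2\varepsilon$ to obtain the corresponding $\eta$. For $s$ with $|s^2-1|\le\eta$ every $t$ lying between $s^2$ and $1$ satisfies $|t-1|\le\eta$, so (treating the cases $s^2\le 1$ and $s^2>1$ separately, to account for the orientation of the integral)
$$\left|\int_{s^2}^1 R(t)\,dt\right|\le\left|\int_{s^2}^1 |R(t)|\,dt\right|\le\eta'\left|\int_{s^2}^1 |t-1|\,dt\right|=\eta'\,\frac{(s^2-1)^2}{2}=\varepsilon\,(s^2-1)^2,$$
which is exactly the claimed bound. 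Finally, since $|s^2-1|=|s-1|\,|s+1|\le 3|s-1|$ whenever $|s-1|\le 1$, it suffices to take $\delta:=\min\{1,\eta/3\}$: then $1-\delta\le s\le 1+\delta$ forces $|s^2-1|\le 3\delta\le\eta$, and the proof is complete.

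There is essentially no genuine obstacle here: this is a routine application of Taylor's theorem. The only two points requiring slight care are bounding the integral $\int_{s^2}^1$ uniformly for $s$ on either side of $1$ (the orientation of the interval of integration), and converting the neighbourhood of $1$ in the variable $t=s^2$ into a neighbourhood of $1$ in the variable $s$.
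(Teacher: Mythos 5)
Your proof is correct and is essentially the same argument as the paper's: a second-order Taylor (Peano) expansion of $V$ at $\tau=1$ using $V(1)=V'(1)=0$ and $V''(1)=-F'(1)=1$. The only cosmetic difference is that you obtain the expansion by integrating the first-order expansion of $F$, while the paper invokes Taylor's theorem for $V$ directly; your added care about the orientation of $\int_{s^2}^1$ and the passage from the variable $t=s^2$ back to $s$ is fine but not a substantive departure.
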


\begin{proof}
Note that, by (F1) and Taylor's expansion theorem, we get that
$$
V(\tau) = \frac12 V''(1) (\tau-1)^2 + (\tau-1)^2 \xi_\tau,
$$
where $\xi_\tau \to 0$ as $\tau \to 1$. Then
$$
\left| V(s^2) - \frac12 V''(1) (s^2-1)^2 \right| = (s^2-1)^2 | \xi_{s^2}|.
$$
For any $\varepsilon > 0$ we find $\delta> 0$ such that $| \xi_{s^2}| < \varepsilon$ for $1-\delta \leq s \leq 1 + \delta$, and we conclude.
\end{proof}

Let $2 < q < 2^*$ and $\delta > 0$. We observe that (F1) and (F2) imply that for every $\varepsilon > 0$, there is $C_\varepsilon > 0$ such that the following inequality holds
\begin{equation}\label{eq:est3}
		\left| V(s^2) - \frac12 \left( \varphi^2 (s)-1\right)^2 \right| \leq \varepsilon \left| s-1\right|^{2^{*}} + C_\varepsilon \left| s-1\right|^{q}.
\end{equation}
for all $s \geq 0$ such that $|s-1| > \delta$.

Before giving the proof of Lemma \ref{lem:vanishing} we introduce the following notation
$$
| u |_{p, A} := |u \chi_A |_p = \left( \int_A |u|^p \, dx \right)^{1/p}
$$
for $u \in L^p (\R^d)$ and any measurable set $A \subset \R^d$. Moreover, for any $u \in L^1_{\mathrm{loc}} (\R^d)$ and any measurable set $A \subset \R^d$ of finite measure we define the {\em integral average}
$$
m(u, A) := \fint_{A} u \, dx := \frac{1}{|A|} \int_A u \, dx.
$$

\begin{altproof}{Lemma \ref{lem:vanishing}}
Suppose that \eqref{eq:vanishing2} holds. We will show that
the following condition holds
\begin{equation}\label{eq:vanishing}
\lim_{n\to\infty}\sup_{y\in\R^d}\int_{B(y,1)}|u_n - m(u_n, B(y,1))|^2+\left(\varphi^2(|1+u_n|) - 1\right)^2\,dx=0.
\end{equation}
Observe that
\begin{align*}
\int_{B(y,1)} |m(u_n(B(y,1))|^2 \, dx &= |B(0,1)| |m(u_n(B(y,1))|^2 \\
&= \frac{1}{|B(0,1)|} \left| \int_{B(y,1)} u_n(x) \, dx \right|^2 \lesssim \int_{B(y,1)} |u_n|^2 \, dx.
\end{align*}
Hence
$$
\sup_{y\in\R^d} \int_{B(y,1)} |m(u_n(B(y,1))|^2 \, dx \lesssim \sup_{y\in\R^d} \int_{B(y,1)} |u_n|^2 \, dx \to 0
$$
thanks to \eqref{eq:vanishing2}. Thus
\begin{align*}
\sup_{y\in\R^d} \int_{B(y,1)} |u_n - m(u_n(B(y,1))|^2 +\left(\varphi^2(|1+u_n|) - 1\right)^2 \, dx &\leq \\
\sup_{y\in\R^d} \int_{B(y,1)} 2|u_n|^2 + 2|m(u_n(B(y,1))|^2 +\left(\varphi^2(|1+u_n|) - 1\right)^2 \, dx &\to 0
\end{align*}
and \eqref{eq:vanishing} hold.

Now, following the arguments in \cite[Lemma 3.2]{maris}, and for the reader’s convenience -- especially since we work under more general assumptions (F2) -- we provide the details below.

For the simplicity of notation, set $m_n(x) := m(u_n,B(x,1))$. We define
		$$
		h_n := \max \left\{ \left( \sup_{y \in \R^d} |u_n-m_n|_{2,B(y,1)} \right)^{\frac{1}{d+2}}, \left( \sup_{y \in \R^d} | H(m_n(y)) | \right)^{\frac{1}{d}} \right\},
		$$
		where $H$ is defined in Lemma \ref{lem3.1}(iii). Observe that for any $z \in \C$ we have
		\begin{equation}\label{3.12}
		|H(z)| \lesssim \left| \varphi^2 (|z+1|)-1 \right|.
		\end{equation}
		Note that $\sup_{y \in \R^d} |u_n-m_n(y)|_{2,B(y,1)} \to 0^+$ thanks to \eqref{eq:vanishing}. 
		Then \eqref{eq:vanishing} and \eqref{3.12} imply that
		$$
		\sup_{y \in \R^d} |H(u_n)|_{2,B(y,1)}^2 \lesssim \sup_{y \in \R^d} \int_{B(y,1)} \left( \varphi^2(|1+u_n|) - 1 \right)^2 \, dx \to 0^+.
		$$
		Since $H$ is Lipschitz-continuous on $\C$
		$$
		\sup_{y\in\R^d} |H(u_n)-H(m_n(y))|_{2,B(y,1)} \lesssim \sup_{y\in\R^d} |u_n-m_n(y)|_{2, B(y,1)} \to 0^+
		$$
		again thanks to \eqref{eq:vanishing}. Hence $\sup_{y\in\R^d} |H(m_n(y))|_{2,B(y,1)} \to 0$ and therefore $h_n \to 0^+$.

		Hence, we may assume that $0 < h_n < 1$ for any $n \geq 1$. Let $v_n$ be the minimizer of $G_{h_n}^{u_n}$ given by Lemma \ref{lem3.1}(ii), it satisfies 
		$$
		-\Delta v_n + H(v_n) + \frac{1}{h_n^2} \varphi' (|v_n-u_n|^2) (v_n - u_n) = 0
		$$
		in the sense of distributions. 
		
In view of the elliptic regularity, we follow arguments presented in {\em Step 2} and {\em Step 3} from \cite[Lemma 3.2]{maris} and we obtain that $v_n$ are  locally uniformly H\"older continuous  with exponent $\frac12$, namely there are $R > 0$, $C_* > 0$ such that for any $n$ there holds $|v_n(x)-v_n(y)| \leq C_* |x-y|^{1/2}$ if $|x-y| < R$. Now, put $$\delta_n := \left| |v_n+1|-1 \right|_\infty.$$ 
		 Using Lipschitz-continuity of $z \mapsto (\varphi^2(|1+z|)-1)^2$, \eqref{3.2} and boundedness of $E_{GL}(u_n)$, we get
\begin{align*}
&\quad \int_{B(x,1)} \left| \left( \varphi^2 (|1+v_n(y)|)-1 \right)^2 - \left( \varphi^2 (|1+u_n(y)|)-1 \right)^2 \right| \, dy \\
&\lesssim \int_{B(x,1)} |v_n(y)-u_n(y)| \, dy \lesssim |v_n-u_n|_{2,B(x,1)} \lesssim |v_n-u_n|_2 \lesssim h_n^{2/d}.
\end{align*}
Since $d \geq 3$, and using \eqref{eq:vanishing} we get that 
\begin{equation*}
\sup_{x \in \R^d} \int_{B(x,1)} \left( \varphi^2 (|1+v_n(y)|)-1 \right)^2 \, dy \to 0.
\end{equation*}
Choose $x_n \in \R^d$ such that
$$
\left| |v_n(x_n)+1|-1 \right| \geq \frac12 \delta_n.
$$
Then - from the local uniform H\"older continuity, $|v_n(x_n) - v_n(x)| \leq C_* |x_n - x|^{1/2}$ if $x \in B(x_n, R)$. Without loss of generality we may assume that $R < 1$. Set 
$$
r_n := \min \left\{ R, \left( \frac{\delta_n}{4 C_*} \right)^2 \right\}.
$$
Then, if $|x_n - x| < r_n$, we get
$$
\Big| \left| |v_n(x_n)+1|-1 \right| -  \left| |v_n(x)+1|-1 \right| \Big| \leq |v_n(x_n)-v(x)| \leq C_* |x_n-x|^{1/2} < C_* r_n^{1/2} \leq \frac{\delta_n}{4}.
$$
Hence
$$
\left| |v_n(x)+1|-1 \right| \geq \frac14 \delta_n
$$
if $x \in B(x_n, r_n)$. Then
		\begin{align*}
		\int_{B(x_n,1)} \left( \varphi^2 (|1+v_n(y)|)-1 \right)^2 \, dy &\geq \int_{B(x_n, r_n)} \left( \varphi^2 (|1+v_n(y)|)-1 \right)^2 \, dy \\
		&\geq \int_{B(x_n, r_n)} \eta \left( \frac{\delta_n}{4} \right) \,dy  \gtrsim \eta \left( \frac{\delta_n}{4} \right) r_n^d,
		\end{align*}
		where $\eta$ is given by
\begin{equation}\label{3.30}
\eta(s):=\inf\{(\varphi^2(\tau)-1)^2 : |1-\tau| \geq s \}.
\end{equation}		
Hence $\delta_n \to 0$.

		Now we claim that 
		$$
		E_{GL}(v_n) + c\cQ(v_n) \geq 0
		$$
		for large $n$. Fix $n$ sufficiently large so that $1-\delta \leq |v_n+1| \leq 1 + \delta$ and $\delta < 1$ (note that $\delta$ doesn't depend on $n$). From Lemma \ref{lem:lifting} there are real valued functions $\rho_n, \theta_n$ such that $\rho_n -1 \in H^1 (\R^d)$, $\theta_n \in \cD^{1,2}(\R^d)$ such that $1+v_n = \rho_n e^{i\theta_n}$ and
		$$
		\cQ(v_n) = - \frac12 \int_{\R^d} (\rho_n^2-1) \frac{\partial \theta_n}{\partial x_1} \, dx.
		$$
		Thus
		\begin{align*}
		\sqrt{2}(1-\delta)|\cQ(v_n)| &\leq \frac{\sqrt{2}}{2} (1-\delta) \left| \frac{\partial \theta_n}{\partial x_1} \right|_2 | \rho_n^2 - 1 |_2 \leq \frac12 (1-\delta)^2 \left| \frac{\partial \theta_n}{\partial x_1} \right|_2^2 + \frac14 | \rho_n^2 - 1 |_2^2 \\
		&\leq \frac12 \int_{\R^d} \rho_n^2 |\nabla \theta_n|^2 + \frac14 (\rho_n^2 - 1)^2 \, dx = E_{GL}(v_n).
		\end{align*}
		Hence, for sufficiently large $n$ and $|c| < \sqrt{2}$ we get
		$$
		E_{GL}(v_n) + c\cQ(v_n) \geq 0.
		$$
		Now \eqref{3.4} implies that
		$$
		\left| \cQ(u_n) - \cQ(v_n) \right| \lesssim \left( h_n^2 + (E_{GL}(u_n))^{2/d} h_n^{4/d} \right)^{1/2} E_{GL}(u_n) \lesssim \left( h_n^2 + M^{2/d} h_n^{4/d} \right)^{1/2} M \to 0,
		$$
		where, $M := \sup_n E_{GL}(u_n)$. Hence,
		\begin{align*}
		&\quad E_c(u_n) = E_{GL}(u_n) + c \cQ(u_n) + \frac12 \int_{\R^d} \left( V(|1+u_n|^2) - \frac12 \left( \varphi^2 (|1+u_n|-1\right)^2 \right) \, dx \\
		&\geq \underbrace{E_{GL}(v_n) + c\cQ(v_n)}_{\geq 0} + \underbrace{c (\cQ(u_n)-\cQ(v_n))}_{\to 0} - \frac12 \int_{\R^d} \left| V(|1+u_n|^2) - \frac12 \left( \varphi^2 (|1+u_n|-1\right)^2 \right| \, dx.
		\end{align*}
		It is sufficient to show that 
		$$
		\int_{\R^d} \left| V(|1+u_n|^2) - \frac12 \left( \varphi^2 (|1+u_n|)-1\right)^2 \right| \, dx \to 0.
		$$
		Take any $\varepsilon > 0$. Lemma \ref{lem:est1} implies that
		$$
		\left| V(|1+z|^2) - \frac12 \left( |1+z|^2-1\right)^2 \right| \leq \varepsilon \left( |1+z|^2-1\right)^2
		$$
		for $\left| |1+z|-1 \right| \leq \delta = \delta(\varepsilon)$. Without loss of generality we may assume that $\delta \leq 1$ and therefore
		$$
		\left| V(|1+z|^2) - \frac12 \left( \varphi^2 (|1+z|)-1\right)^2 \right| \leq \varepsilon \left( \varphi^2 (|1+z|)-1\right)^2.
		$$
		for $\left| |1+z|-1 \right| \leq \delta$. Thus
		\begin{align*}
		&\quad \int_{\{ ||1+u_n|-1| \leq \delta\}} \left| V(|1+u_n|^2) - \frac12 \left( \varphi^2 (|1+u_n|)-1\right)^2 \right| \, dx \\
		&\leq \varepsilon \int_{\{ ||1+u_n|-1| \leq \delta\}} \left( \varphi^2 (|1+u_n|)-1\right)^2 \, dx \leq \varepsilon M.
		\end{align*}
		Define $w_n := ||1+u_n|-1|$. Then $w_n \leq |u_n|$, $w_n \in \cD^{1,2}(\R^d)$ and
		$$
		\int_{\R^d} |\nabla w_n|^2 \, dx \lesssim 1.
		$$
		In particular, from the continuity of the embedding $\cD^{1,2} (\R^d) \subset L^{2^*} (\R^d)$, $(w_n)$ is bounded in $L^{2^*}(\R^d)$.
		Note that from \eqref{eq:est3},
		$$
		\left| V(|1+z|^2) - \frac12 \left( \varphi^2 (|1+z|)-1\right)^2 \right| \leq \varepsilon \left| |1+z|-1\right|^{2^{*}} + C_\varepsilon \left| |1+z|-1\right|^{q}.
		$$
		Thus
		\begin{equation} \label{vphi}
		\begin{aligned}
		&\quad \int_{\{ ||1+u_n|-1| > \delta\}} \left| V(|1+u_n|^2) - \frac12 \left( \varphi^2 (|1+u_n|)-1\right)^2 \right| \, dx \\
		&\leq \varepsilon \int_{\{w_n > \delta\}} |w_n|^{2^*} \, dx + C_\varepsilon \int_{\{w_n > \delta\}} |w_n|^{q} \, dx \\
		&\leq \varepsilon |w_n|_{2^*}^{2^*} + C_\varepsilon \left( \int_{\{w_n > \delta\}} |w_n|^{2^*} \, dx \right)^{q/2^*} | \{ x \ : \  w_n(x) > \delta \}|^{1 - q/2^*} \\
		&\leq \varepsilon |w_n|_{2^*}^{2^*} + C_\varepsilon C_S^{q} |\nabla w_n|_2^{q} | \{ x \ : \  w_n(x) > \delta \}|^{1 - q/2^*} \\
		&\leq \varepsilon \underbrace{|w_n|_{2^*}^{2^*}}_{\mathrm{bounded}} + C_\varepsilon C_S^{q} M^{q/2} | \{ x \ : \  w_n(x) > \delta \}|^{1 - q/2^*},
		\end{aligned}\end{equation}
		where we note that $2 < q < 2^*$. We will show that for any $\delta > 0$
		$$
		| \{ x \ : \  w_n(x) > \delta \}| \to 0 \quad \mbox{as } n\to\infty.
		$$
		By contradiction, assume that (up to a subsequence), 
		$$
		| \{ x \ : \  w_n(x) > \delta_0 \}| \geq \gamma > 0
		$$
		for some $\delta_0$. Since $|\nabla w_n|_2$ stays bounded, then - from Lieb's Lemma (\cite[Lemma 6]{Lieb}, \cite[Lemma 2.2]{BrezisLieb})
		$$
		\beta := \inf_n \left| \{ x \ : \ w_n(x) > \delta_0/2 \} \cap B(y_n, 1) \right| > 0
		$$
		for some $y_n$. However, then if $w_n(x) > \delta_0/2$, then 
		$$
		\left( \varphi^2 (|1+u_n|)-1\right)^2 \geq \eta(\delta_0/2) > 0, 
		$$
		where $\eta$ is defined by \eqref{3.30}. Hence
		$$
		\int_{B(y_n, 1)} \left( \varphi^2 (|1+u_n|)-1\right)^2 \, dx \geq \eta(\delta_0/2) \beta > 0
		$$ 
		which is a contradiction with \eqref{eq:vanishing}. This, by \eqref{vphi}, proves that
		$$
		\int_{\{ ||1+u_n|-1| > \delta\}} \left| V(|1+u_n|^2) - \frac12 \left( \varphi^2 (|1+u_n|)-1\right)^2 \right| \, dx \to 0
		$$
		for any $\delta>0$ and therefore
		$$
		\int_{\R^d} \left| V(|1+u_n|^2) - \frac12 \left( \varphi^2 (|1+u_n|)-1\right)^2 \right| \, dx \to 0.
		$$
		and the proof is completed.
\end{altproof}

\section{Variational approach and proof of Theorem \ref{thm:1}} \label{th1}

Before giving the proof of Theorem \ref{thm:1}, we state some preliminary results.
Let $\sigma>0$ and define $u_{1,\sigma}(x):=u(x_1, y/\sigma)$ so that the Lagrangian has the following form
$$E_c(u_{1,\sigma})=\frac12\sigma^{d-3}\int_{\R^d}|\nabla_y u|^2\,dx+\frac12\sigma^{d-1}\int_{\mathbb R^d} |\partial_{x_1} u|^2+V(|1+u|^2)\,dx+\sigma^{d-1}c\cQ(u).$$
We will use the {\em Poho\v{z}aev constraint} of the form
\begin{equation*}
\mathcal M=\big\{u \in \cX\setminus\{0\} \ : \ P_c(u)=0\big\},
\end{equation*}
where
\begin{equation*}\begin{aligned}
P_c(u)&:=\frac{1}{d-1}\frac{\partial}{\partial \sigma}E_c(u_{1,\sigma})|_{\sigma=1}\\
&=\frac{d-3}{2(d-1)}\int_{\R^d}|\nabla_{y} u|^2\,dx+\frac{1}{2}\int_{\R^d}|\partial_{x_1} u|^2+V(|1+u|^2)\,dx+c\cQ(u)\\
&=\frac{d-3}{2(d-1)}\int_{\R^d}|\nabla_{y} u|^2\,dx-L(u),
\end{aligned}\end{equation*}
and
\begin{equation}\label{defL}
L(u):=-\biggl[\frac{1}{2}\int_{\mathbb R^d}|\partial_{x_1} u|^2+V(|1+ u|^2)\,dx+c\mathcal{Q}(u)\biggr]=-\biggl[E_c(u)-\frac{1}{2}\int_{\mathbb R^d}|\nabla_y u|^2\,dx\biggr].
\end{equation}
Observe that
\begin{equation*}
\begin{aligned}
P_c(u_{1,\sigma})
&=\frac{d-3}{2(d-1)}\int_{\R^d}|\nabla_{y} u_{1,\sigma}|^2\,dx+\!\frac{1}{2}\!\int_{\R^d}|\partial_{x_1} u_{1,\sigma}|^2\!+\!V(|1+u_{1,\sigma}|^2)\,dx+c\cQ(u_{1,\sigma})\\
&=\sigma^{d-3}\biggl[\frac{d-3}{2(d-1)}\int_{\mathbb R^d}|\nabla_{y} u|^2\,dx-\sigma^2L(u)\biggr].
\end{aligned}\end{equation*}
We will define $m_{\mathcal{M}}(u):=u_{1,\sigma}$, the projection of $u$ onto $\mathcal{M}$ as follows.

\begin{lemma}\label{proj}
Let $d\geq 4$ and
	$$\mathcal{U}:=\{u\in \mathcal{X} : L(u)>0\}.$$
	Then $\mathcal{U}\neq\emptyset$ and for any $u\in \mathcal{U}$ there exist a unique $\sigma=\sigma_u>0$ such that
	$$\sigma_u:=\biggl(\frac{(d-3)\int_{\mathbb R^d}|\nabla_{y} u|^2\,dx}{2(d-1)L(u)}\biggr)^{1/2}$$
	and $u_{1,\sigma}\in \mathcal{M}$. Thus $\mathcal{U}\supset\mathcal{M}\neq\emptyset$ and $m_{\mathcal{M}}:\mathcal{U}\to \mathcal{M}$ is well-defined.
\end{lemma}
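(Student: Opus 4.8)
The plan is to separate two issues: the elementary projection statement (existence and uniqueness of $\sigma_u$, the inclusion $\cM\subset\cU$, well‑definedness of $m_\cM$), and the genuinely non‑trivial fact that $\cU$ is nonempty. I expect only the second to be a real obstacle: it forces one to produce an explicit test function, whereas the first is pure bookkeeping with the displayed formula for $P_c(u_{1,\sigma})$.

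For the projection part I would first note that $0\notin\cU$, since $L(0)=-[0+V(1)+0]=0$, so $\cU\subset\cX\setminus\{0\}$; and that every $u\in\cX\setminus\{0\}$ satisfies $\int_{\R^d}|\nabla_y u|^2\,dx>0$, because if this integral vanished then $u$ would be a.e.\ independent of $y$, whence $\int_{\R^d}|\nabla u|^2\,dx<\infty$ would force $\partial_{x_1}u\equiv0$ as well, so $u$ is constant and hence $u=0$ in $\cD^{1,2}(\R^d)$. Given $u\in\cU$, the identity $P_c(u_{1,\sigma})=\sigma^{d-3}\big[\tfrac{d-3}{2(d-1)}\int_{\R^d}|\nabla_y u|^2\,dx-\sigma^2L(u)\big]$ together with $d\ge4$, $L(u)>0$ and $\int|\nabla_y u|^2\,dx>0$ shows that for $\sigma>0$ one has $P_c(u_{1,\sigma})=0$ precisely for $\sigma=\sigma_u$, and since $u\ne0$ forces $u_{1,\sigma_u}\ne0$, this gives $u_{1,\sigma_u}\in\cM$. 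Conversely, if $u\in\cM$ then $P_c(u)=0$ yields $L(u)=\tfrac{d-3}{2(d-1)}\int|\nabla_y u|^2\,dx>0$, so $\cM\subset\cU$. Hence, once $\cU\ne\emptyset$, the map $m_\cM:\cU\to\cM$, $m_\cM(u):=u_{1,\sigma_u}$, is well defined and $\emptyset\ne m_\cM(\cU)\subset\cM\subset\cU$.

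To get $\cU\ne\emptyset$ one must find $u\in\cX$ with $L(u)>0$, i.e.\ with $N(u):=\tfrac12\int_{\R^d}|\partial_{x_1}u|^2\,dx+\int_{\R^d}V(|1+u|^2)\,dx+c\cQ(u)<0$. The plan is to exploit the rescaling $u_\lambda(x):=u(\lambda x_1,y)$, under which $\int|\partial_{x_1}u_\lambda|^2\,dx=\lambda\int|\partial_{x_1}u|^2\,dx$, $\int V(|1+u_\lambda|^2)\,dx=\lambda^{-1}\int V(|1+u|^2)\,dx$, and $\cQ(u_\lambda)=\cQ(u)$; thus $N(u_\lambda)=\tfrac\lambda2 P+\lambda^{-1}W+c\cQ(u)$ with $P:=\int|\partial_{x_1}u|^2\,dx\ge0$, $W:=\int V(|1+u|^2)\,dx$. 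If $W\le0$ this tends to $-\infty$ as $\lambda\downarrow0$; if $W>0$ its infimum over $\lambda>0$ equals $\sqrt{2PW}+c\cQ(u)$. So it is enough to construct $u\in\cX$ with $\cQ(u)<0$ and $c^2\cQ(u)^2>2PW$, and then pass to $u_\lambda$ for a suitable $\lambda$.

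For such a $u$ I would use the lifted form $1+u=\rho e^{i\theta}$ with $\rho,\theta\in\cC_0^\infty(\R^d)$, $0<\rho\le1$ (so $u\in\cC_0^\infty(\R^d)\subset\cX$): then $\int|\partial_{x_1}u|^2\,dx=\int(\partial_{x_1}\rho)^2+\rho^2(\partial_{x_1}\theta)^2\,dx$, $\int V(|1+u|^2)\,dx=\int V(\rho^2)\,dx$, and, since $\langle i\partial_{x_1}u,u\rangle=-\rho^2\partial_{x_1}\theta+\partial_{x_1}(\rho\sin\theta)$ with $\rho\sin\theta$ compactly supported, $\cQ(u)=-\tfrac12\int\rho^2\partial_{x_1}\theta\,dx=\tfrac12\int(1-\rho^2)\partial_{x_1}\theta\,dx$. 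Taking $\rho^2=1-f$ with $f=f_\#\,\chi(x_1,y)$ for a smooth cut‑off $\chi$ equal to $1$ on a large core and supported in a fixed neighbourhood of it, and $\partial_{x_1}\theta=-|t|\big(\tfrac f{1-f}-\kappa(y)\eta(x_1)\big)$ with a fixed widely spread bump $\eta\ge0$ ($\int\eta=1$, $\int\eta^2$ small, $\supp\eta$ disjoint in $x_1$ from $\supp f$) and $\kappa(y)$ chosen so that $\partial_{x_1}\theta(\cdot,y)$ has zero $x_1$‑mean for a.e.\ $y$ (needed so that $\theta$, hence $u$, stays compactly supported), one finds — letting the support of $\eta$ spread, then $|t|\to\infty$, then the core grow — that $c^2\cQ(u)^2>2PW$ reduces to $\tfrac{c^2}{8}\int_{\R^d}\tfrac{f^2}{1-f}\,dx>\int_{\R^d}V(1-f)\,dx$, which holds once $\tfrac{c^2 f_\#^2}{8(1-f_\#)}>V(1-f_\#)$; since $V\in\cC[0,1]$ is bounded while the left‑hand side blows up as $f_\#\uparrow1$, it suffices to take $f_\#\in(0,1)$ close enough to $1$, i.e.\ a pronounced density depletion (for $F(s)=1-s$ the sharp requirement is $\min\rho<c/2$, admissible since $c<\sqrt2<2$). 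This test‑function construction is the heart of the matter; the rest is routine, and as an alternative one may invoke the corresponding non‑emptiness of the Poho\v{z}aev set established in \cite{maris}.
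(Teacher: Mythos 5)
Your proof is correct, and it splits the lemma exactly as the paper does implicitly: the projection part (positivity of $\int_{\R^d}|\nabla_y u|^2\,dx$ for $u\neq 0$, uniqueness of $\sigma_u$ from the displayed formula for $P_c(u_{1,\sigma})$, and $\cM\subset\cU$) is pure bookkeeping, and the only substantive point is $\cU\neq\emptyset$. Where you genuinely diverge is there: the paper reduces $\cU\neq\emptyset$ to finding $w$ with $E_c(w)<0$ and then simply cites Mari\c{s} (\cite[Lemma 4.4]{maris}) for a family $v_R\in H^1(\R^d)$ with $E_c(v_R)<0$, whereas you carry out a self-contained construction: the lifting $1+u=\rho e^{i\theta}$, the identity $\cQ(u)=-\tfrac12\int_{\R^d}\rho^2\partial_{x_1}\theta\,dx$, the zero-mean correction $\kappa(y)\eta(x_1)$ keeping $\theta$ compactly supported, and the $x_1$-only scaling $u(\lambda x_1,y)$ (under which $\cQ$ is invariant) to reduce everything to $\tfrac{c^2}{4}\int f^2/(1-f)\,dx>\int V(1-f)\,dx$, which a deep enough density depletion achieves since $V$ is bounded on $[0,1]$ while the left-hand side blows up. This is essentially the mechanism inside Mari\c{s}'s lemma made explicit, so both routes deliver the same conclusion; yours costs a page of computation but removes the external dependence and works verbatim under (F1)--(F2). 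A few cosmetic slips to fix: in the paper $L(u)=-\bigl[\tfrac12\int(|\partial_{x_1}u|^2+V(|1+u|^2))\,dx+c\cQ(u)\bigr]$, so the $\tfrac12$ also multiplies the potential term and your target inequality should read $c^2\cQ(u)^2>PW$ (hence $\tfrac{c^2}{4}$, not $\tfrac{c^2}{8}$, and $\min\rho<c/\sqrt2$ in the Gross--Pitaevskii example); you mean $\rho-1,\theta\in\cC_0^\infty(\R^d)$ rather than $\rho\in\cC_0^\infty(\R^d)$ with $0<\rho\le1$; and the case $W=0$ needs $\cQ(u)<0$ rather than the claim that $N(u_\lambda)\to-\infty$. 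None of these affects the validity of the argument.
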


\begin{proof} 
	We only need to prove that $\mathcal{U}\neq\emptyset$, which is equivalent to find $w\in\mathcal{X}\setminus\{0\}$ such that $E_c(w)<0$.
	From  \cite[Lemma 4.4]{maris}, there exists a continuous map $R\mapsto v_R$ from $[2,\infty)$ to $H^1(\mathbb R^d)$, such that $E_c(v_R)<0$ for $R$ sufficiently large.
\end{proof}

Moreover we observe that arguing as  in \cite[Lemma 4.3]{maris} under assumptions (F1)--(F2) we show that for sufficiently small $\eps>0$ there is $M>0$ such that 
\begin{equation}\label{eqCompEnergies}
E_c(u)\geq \cE(u)-c|\cQ(u)|\geq \eps E_{GL}(u)
\end{equation}
for any $u\in\cX$ such that $E_{GL}(u)\leq M$.

Therefore we may define 
\begin{equation*}
T_c:=\inf_{\mathcal{M}}E_c>0.
\end{equation*}
Note that, for any $u\in \mathcal{M}$, then the Lagrangian becomes
\begin{equation}\label{encon}
E_c(u)=\frac{1}{d-1}\int_{\mathbb R^d}|\nabla_{y} u|^2 \, dx.
\end{equation}
Thus, if $u\in\cM\cap \cU$, then from the definition of $L$ in \eqref{defL}, we have
\begin{equation*}
\frac{d-3}{2(d-1)}\int_{\mathbb R^d}|\nabla_{y} u|^2\,dx-L(u)=0 \iff L(u)=\frac{d-3}{2(d-1)} E_c(u).
\end{equation*}

\begin{proposition}\label{sob}
	Let $d\geq 4$. For every $u\in \mathcal{U}$, it holds
	$$\int_{\mathbb R^d}|\nabla_{y} u|^2\,dx\ge K_0 T_c^{2/(d-1)}L(u)^{(d-3)/(d-1)},$$
	where
	$$K_0:=(d-1)\biggl(\frac{2}{d-3}\biggr)^{(d-3)/(d-1)}$$
\end{proposition}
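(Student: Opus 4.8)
The plan is to use the projection $m_{\mathcal{M}}$ constructed in Lemma \ref{proj} together with the identity \eqref{encon} valid on the Poho\v{z}aev manifold. Fix $u\in\mathcal{U}$. By Lemma \ref{proj} there is a unique $\sigma_u>0$, namely
$$
\sigma_u=\left(\frac{(d-3)\int_{\R^d}|\nabla_y u|^2\,dx}{2(d-1)L(u)}\right)^{1/2},
$$
such that $u_{1,\sigma_u}\in\mathcal{M}$. Since $u\in\mathcal{U}\subset\mathcal{X}\setminus\{0\}$ and $\sigma_u>0$, the rescaled function $u_{1,\sigma_u}$ is nonzero and lies in $\mathcal{M}$, hence $E_c(u_{1,\sigma_u})\geq T_c=\inf_{\mathcal{M}}E_c$.

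Next I would combine two elementary observations. From the scaling formula for $E_c(u_{1,\sigma})$ displayed before Lemma \ref{proj} (or directly by the change of variables $z=y/\sigma$), the $|\nabla_y\cdot|^2$-term transforms as $\int_{\R^d}|\nabla_y u_{1,\sigma}|^2\,dx=\sigma^{d-3}\int_{\R^d}|\nabla_y u|^2\,dx$. On the other hand, \eqref{encon} applied to $u_{1,\sigma_u}\in\mathcal{M}$ gives $E_c(u_{1,\sigma_u})=\frac{1}{d-1}\int_{\R^d}|\nabla_y u_{1,\sigma_u}|^2\,dx$. Combining these with $E_c(u_{1,\sigma_u})\geq T_c$ yields
$$
T_c\leq E_c(u_{1,\sigma_u})=\frac{\sigma_u^{\,d-3}}{d-1}\int_{\R^d}|\nabla_y u|^2\,dx.
$$

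Finally, writing $A:=\int_{\R^d}|\nabla_y u|^2\,dx$ and $B:=L(u)>0$ and substituting the explicit value of $\sigma_u$, the last inequality becomes
$$
(d-1)T_c\leq\left(\frac{d-3}{2(d-1)}\right)^{(d-3)/2}\frac{A^{(d-1)/2}}{B^{(d-3)/2}}.
$$
Raising both sides to the power $2/(d-1)$ and simplifying the constant via the identity $(d-1)^{2/(d-1)}\bigl(2(d-1)/(d-3)\bigr)^{(d-3)/(d-1)}=(d-1)\bigl(2/(d-3)\bigr)^{(d-3)/(d-1)}=K_0$ gives $A\geq K_0\,T_c^{2/(d-1)}B^{(d-3)/(d-1)}$, which is exactly the assertion. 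I would present this exponent bookkeeping as a short explicit computation.

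There is essentially no serious obstacle in this argument: it is a one-line application of the projection $m_{\mathcal{M}}$ followed by algebra. The only point deserving a remark is that $A>0$, equivalently $\nabla_y u\not\equiv 0$, so that $\sigma_u$ is well-defined and positive; this is automatic since $\mathcal{U}\subset\mathcal{X}\setminus\{0\}$ (note $L(0)=0$ because $V(1)=\int_1^1F\,dt=0$) and a function in $\cD^{1,2}(\R^d)$ with $\nabla_y u\equiv 0$ must vanish. Everything else reduces to the definitions of $L$, $T_c$, $\sigma_u$ and the scaling law recorded above.
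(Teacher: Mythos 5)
Your proof is correct and follows essentially the same route as the paper's: project $u$ onto $\mathcal{M}$ via $\sigma_u$, use $T_c\le E_c(u_{1,\sigma_u})=\frac{1}{d-1}\sigma_u^{d-3}\int_{\R^d}|\nabla_y u|^2\,dx$ together with \eqref{encon}, and rearrange the exponents; your constant bookkeeping checks out and reproduces $K_0$. The only addition is your (harmless, and correct) remark that $\nabla_y u\not\equiv 0$, which the paper leaves implicit.
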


\begin{proof}
Take $u\in \mathcal{U}$ and note that by Lemma \ref{proj}, \eqref{encon} and by the definition of $T_c$, we get
$$\begin{aligned}
T_c&\le E_c(u_{1,\sigma_u})=\frac{1}{d-1}\int_{\mathbb R^d}|\nabla_{y} u_{1,\sigma_u}|^2\,dx=\frac{1}{d-1}\sigma_u^{d-3}\int_{\mathbb R^d}|\nabla_{y} u|^2\,dx\\
&=\frac{1}{d-1} \biggl(\frac{(d-3)\int_{\mathbb R^d}|\nabla_{y} u|^2\,dx}{2(d-1)L(u)}\biggr)^{(d-3)/2}\int_{\mathbb R^d}|\nabla_{y} u|^2\,dx\\
& =\frac{(d-3)^{(d-3)/2}}{(d-1)^{(d-1)/2}}\frac{\biggl(\int_{\mathbb R^d}|\nabla_{y} u|^2\,dx\biggr)^{(d-1)/2}}{(2L(u))^{(d-3)/2}},
	\end{aligned}$$
	which completes the proof.
\end{proof}

Below we use the Lions' type lemma (Lemma \ref{lem:vanishing}) to find a non-zero weak limit point of a minimizing sequence, which allows to use our variational approach.

\begin{lemma}\label{nontrivial}
	Let $d\geq 4$ and $(u_n)\subset \mathcal{M}$ be a sequence such that $(E_{GL}(u_n))$ is bounded. Then there is a sequence $(x_n)\subset\R^d$ and $u\in \cX \setminus\{0\}$ such that $u_n(\cdot+x_n)\weakto u$ in $\cD^{1,2}(\mathbb R^d)$.
\end{lemma}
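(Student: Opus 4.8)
The plan is to argue by contradiction: I would first rule out \emph{vanishing} of $(u_n)$ in the sense of \eqref{eq:vanishing2} by combining Lemma \ref{lem:vanishing} with a dilation of the transverse variables, and then extract a nonzero weak limit via the standard local compactness of $\cD^{1,2}(\R^d)$. To begin, I would record the a priori bounds. Since $u_n\in\mathcal M$, the identity \eqref{encon} and $T_c=\inf_{\mathcal M}E_c>0$ give
$$\int_{\R^d}|\nabla_y u_n|^2\,dx=(d-1)E_c(u_n)\ge(d-1)T_c>0,$$
so this quantity is bounded away from $0$; boundedness of $(E_{GL}(u_n))$ forces $(u_n)$ to be bounded in $\cD^{1,2}(\R^d)$ (as $\int|\nabla u_n|^2\le2E_{GL}(u_n)$) and makes $\int|\partial_{x_1}u_n|^2$ and $\int(\varphi^2(|1+u_n|)-1)^2$ bounded as well; finally, $P_c(u_n)=0$ records $L(u_n)=\frac{d-3}{2(d-1)}\int_{\R^d}|\nabla_y u_n|^2\,dx$.

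Next I would dilate. Suppose, for contradiction, that \eqref{eq:vanishing2} holds for $(u_n)$; fix $\sigma_*:=\bigl(\tfrac{2(d-1)}{d-3}\bigr)^{1/2}$ (finite because $d\ge4$) and set $v_n(x_1,y):=u_n(x_1,y/\sigma_*)$. The scaling relations $\int|\nabla_y v_n|^2=\sigma_*^{d-3}\int|\nabla_y u_n|^2$, $\int|\partial_{x_1}v_n|^2=\sigma_*^{d-1}\int|\partial_{x_1}u_n|^2$, $\int(\varphi^2(|1+v_n|)-1)^2=\sigma_*^{d-1}\int(\varphi^2(|1+u_n|)-1)^2$, together with the bounds above, show that $v_n\in\cX$ and $(E_{GL}(v_n))$ is bounded; and, since a fixed dilation of the $y$-variables changes the concentration functional appearing in \eqref{eq:vanishing2} by at most a multiplicative constant depending only on $d$, $(v_n)$ still satisfies \eqref{eq:vanishing2}. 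On the other hand, by the formula for $E_c(u_{1,\sigma})$ with $u=u_n$, $\sigma=\sigma_*$, and using $L(u_n)=\frac{d-3}{2(d-1)}\int|\nabla_y u_n|^2$ (so that $\sigma_*^2\tfrac{d-3}{d-1}=2$),
$$E_c(v_n)=\tfrac12\sigma_*^{d-3}\int_{\R^d}|\nabla_y u_n|^2\,dx-\sigma_*^{d-1}L(u_n)=-\tfrac12\sigma_*^{d-3}\int_{\R^d}|\nabla_y u_n|^2\,dx\le-\tfrac12\sigma_*^{d-3}(d-1)T_c<0,$$
hence $\limsup_n E_c(v_n)<0$, which contradicts the conclusion $\liminf_n E_c(v_n)\ge0$ of Lemma \ref{lem:vanishing} applied to $(v_n)$. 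Therefore \eqref{eq:vanishing2} fails for $(u_n)$: after passing to a subsequence there are $\alpha>0$ and $x_n\in\R^d$ with $\int_{B(0,1)}\bigl(|u_n(\cdot+x_n)|^2+(\varphi^2(|1+u_n(\cdot+x_n)|)-1)^2\bigr)\,dx\ge\alpha$ for all $n$.

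Finally I would pass to the limit. Setting $\tilde u_n:=u_n(\cdot+x_n)$, which is bounded in $\cD^{1,2}(\R^d)$, along a subsequence $\tilde u_n\weakto u$ in $\cD^{1,2}(\R^d)$; since $\cD^{1,2}(\R^d)\hookrightarrow L^{2^*}(\R^d)$ and $B(0,2)$ is bounded, $(\tilde u_n)$ is bounded in $H^1(B(0,2))$, and the Rellich--Kondrachov theorem gives $\tilde u_n\to u$ in $L^2(B(0,2))$ and a.e.\ on $B(0,2)$ (along a further subsequence). Passing to the limit in the last displayed inequality --- the $|u_n|^2$-term by $L^2$-convergence, the $(\varphi^2(|1+u_n|)-1)^2$-term by dominated convergence (its integrand being uniformly bounded on the finite-measure set $B(0,1)$) --- yields $\int_{B(0,1)}\bigl(|u|^2+(\varphi^2(|1+u|)-1)^2\bigr)\,dx\ge\alpha>0$, so $u\not\equiv0$; moreover $u\in\cX$ because $u\in\cD^{1,2}(\R^d)$ and Fatou's lemma (with the bound on $E_{GL}(\tilde u_n)$) gives $\int(\varphi^2(|1+u|)-1)^2<\infty$, i.e.\ $E_{GL}(u)<\infty$. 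I expect the only genuinely delicate point to be the dilation step: one must stretch the transverse variables far enough to make $E_c(v_n)$ strictly --- and uniformly in $n$ --- negative while remaining in the range where \eqref{eq:vanishing2} is preserved; this is exactly where $d\ge4$ (so $\sigma_*<\infty$) and the subsonic regime enter, the latter being already built into $T_c>0$ and into Lemma \ref{lem:vanishing}.
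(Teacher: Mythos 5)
Your proof is correct and follows essentially the same route as the paper: dilate the transverse variables by $\sigma=\sqrt{2(d-1)/(d-3)}$, use the Poho\v{z}aev constraint to show the dilated sequence has $E_c$ bounded away from $0$ from above by a negative constant, contradict Lemma \ref{lem:vanishing}, and extract a nonzero weak limit from the resulting non-vanishing. The only (cosmetic) difference is that you assume vanishing of $(u_n)$ and transfer it to the dilated sequence via a covering argument, whereas the paper applies Lemma \ref{lem:vanishing} to the dilated sequence directly and transfers non-vanishing back; your version makes explicit a step the paper leaves implicit.
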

\begin{proof}
In a similar way as in \cite[Lemma 5.4, step 2]{maris}, let $\sigma:=\sqrt{\frac{2(d-1)}{d-3}}$, and $\widetilde u_n:=(u_n)_{1,\sigma}$. Using the definition of $L$ in \eqref{defL} we obtain that
	$$\begin{aligned}
	0&=P_c(u_n):=\frac{d-3}{2(d-1)}\int_{\R^d}|\nabla_{y}  u_n|^2\,dx-L(u_n)
	=\frac{d-3}{2(d-1)}\sigma^{3-d}\int_{\R^d}|\nabla_{y}  \widetilde u_n|^2\,dx-\sigma^{1-d}L(\widetilde u_n)\\
	&=\sigma^{1-d}\biggl[\frac{d-3}{2(d-1)}\sigma^{2}\int_{\R^d}|\nabla_{y}  \widetilde u_n|^2\,dx-L(\widetilde u_n)\biggr]=\sigma^{1-d}\biggl[\int_{\R^d}|\nabla_{y}  \widetilde u_n|^2\,dx-L(\widetilde u_n)\biggr]\\
 & =\sigma^{1-d}\biggl[\frac{1}{2}\int_{\R^d}|\nabla_{y}  \widetilde u_n|^2\,dx+E_c(\widetilde u_n)\biggr].
	\end{aligned}$$
Therefore 
	$$\frac{1}{2}\int_{\R^d}|\nabla_{y}  \widetilde u_n|^2\,dx+E_c(\widetilde u_n)=0.$$
From Proposition \ref{sob} we infer that $\int_{\R^d}|\nabla_{y}  \widetilde u_n|^2\,dx=\sigma^{d-3}\int_{\R^d}|\nabla_{y} u_n|^2\,dx$ is positive and away from $0$. If $( \widetilde u_n)$ satisfies \eqref{eq:vanishing2} then, in view of Lemma \ref{lem:vanishing}, since also $(E_{GL}(\wt u_n))$ is bounded, we get a contradiction, thus necessarily \eqref{eq:vanishing2} does not hold. Therefore there is $\delta>0$ and $(x_n)\subset\R^d$ such that
 \begin{equation}\label{eq:lemvan}
 \int_{B(x_n,1)}|\wt u_n|^2+\left(\varphi^2(|1+\wt u_n|) - 1\right)^2\,dx\ge \delta.
 \end{equation}
Note that there is $u\in \cX$ such that $u_n(\cdot+x_n)\weakto u$ in $\cD^{1,2}(\mathbb R^d)$. In view of \eqref{eq:lemvan} and since $\cD^{1,2}(\mathbb{R}^d)$ is locally compactly embedded into $L^2_{\loc} (\R^d)$ we infer that $u\neq 0$.
\end{proof}

Now we have all the ingredients to prove Theorem \ref{thm:1}.

\begin{altproof}{Theorem \ref{thm:1}}
Concerning (a), we take $S_c:=K_0^{(d-1)/(d-3)} T_c^{2/(d-3)}>0$. Clearly, \eqref{eq:ineq} holds for any $u\in\cX$ such that $L(u)\leq 0$. If $L(u)>0$, i.e. $u\in\cU$, then \eqref{eq:ineq} follows from Proposition \ref{sob}.

In order to prove (b)--(c), let $(u_n)\subset \cX$ be a sequence such that 
$$
\frac{ \left(\int_{\R^d}|\nabla_y u_n|^2\, dx \right)^{\frac{d-1}{d-3}}}{\frac{1}{2}\int_{\R^d}|\partial_{x_1} u_n|^2 +V(|1+u_n|^2)\,dx+c\cQ(u_n) } \to -S_c
$$
as $n\to\infty$, so that $u_n\in\cU$ for $n$ large. Let $\wt u_n:=m_{\cM}(u_n)\in\cM$ and observe that 
\begin{equation}\label{totc}
E_c(\wt u_n)=\frac{1}{d-1}\int_{\R^d}|\nabla_y \wt u_n|^2\,dx=
\frac{1}{d-1}\sigma_{u_n}^{d-3}\int_{\R^d}|\nabla_y  u_n|^2\,dx\to T_c.
\end{equation}
Next, we claim that $(E_{GL}(\wt u_n))$ is bounded. In particular, recall that
$$0=P_c(\wt u_n)=\frac{d-3}{2(d-1)}\int_{\R^d}|\nabla_{y} \wt u_n|^2\,dx+\frac{1}{2}\int_{\R^d}|\partial_{x_1} \wt u_n|^2+V(|1+\wt u_n|^2)\,dx+c\cQ(\wt u_n)$$
and, by \eqref{totc} we deduce the boundedness of $\nabla_{y} \wt u_n$ in $L^2(\mathbb R^d)$.
Now we show the boundedness of $\partial_{x_1}\wt u_n, \left( \varphi^2(|1+\wt u_n|) - 1 \right)$ in $L^2(\mathbb R^d)$.
Observe that, from \eqref{eqCompEnergies} we find $\delta>0$ such that
\begin{equation}\label{eq:estbelow}
E_c(u)\geq \eps \delta
\end{equation}
for $u\in\cX$ such that $E_{GL}(u)=\delta$.
Moreover
 we find a unique sequence $(\wt\sigma_n)\subset (0,+\infty)$ such that
$$\delta=E_{GL}(\wt u_n(x_1, y/\wt\sigma_n)) =\wt\sigma_n^{d-1}\int_{\mathbb R^d} \frac12 |\partial_{x_1}\wt u_n|^2+ \frac14 \left( \varphi^2(|1+\wt u_n|) - 1 \right)^2 \, dx+\wt\sigma_n^{d-3}\int_{\mathbb R^d} \frac12 |\nabla_{y}\wt u_n|^2 \, dx.$$
Note that, if $|\partial_{x_1}\wt u_n|_2\to\infty$ or $\int_{\R^d} \left( \varphi^2(|1+\wt u_n|) - 1 \right) \, dx\to\infty$, then
 $\wt\sigma_n\to0$ and
 $$ E_c(\wt u_n(x_1, y/\wt\sigma_n))=\frac{1}{d-1}\wt\sigma_n^{d-3}\int_{\mathbb R^d}|\nabla_{y}\wt u_n|^2 \, dx\to 0,\qquad n\to\infty,$$
 which contradicts \eqref{eq:estbelow}.

Now, in view of Lemma \ref{nontrivial}, there exists a sequence $(x_n)\subset \R^d$ and $\wt u\in \cX \setminus\{0\}$ such that $\wt{u}_n(\cdot+x_n)\weakto\wt{u}$ in $\cD^{1,2}(\mathbb R^d)$ and $\wt u_n(\cdot+x_n)\to\wt u$ a.e. in $\mathbb R^d$. 
For simplicity of notation, in what follows we may assume that $x_n=0$ for every $n\in\mathbb \N$.

Take $v\in \cC_0^\infty(\mathbb R^d)$ and $t\in \mathbb R$ sufficiently small such that $L(\wt u_n+tv)>0$, namely $(\wt u_n+tv)\subset\mathcal{U}$. Observe that
\begin{equation*}\begin{aligned}
L(\wt u_n+tv)&-L(\wt u_n)=-\int_{\mathbb R^d} c[\langle i\partial_{x_1}(\wt u_n+tv), \wt u_n+tv\rangle-\langle i\partial_{x_1}\wt u_n, \wt u_n\rangle]\\
&+\frac{1}{2}\Bigl[V(|1+\wt u_n+tv|^2)-V(|1+\wt u_n|^2)+|\partial_{x_1}(\wt u_n+tv)|^2-|\partial_{x_1}\wt u_n|^2\Bigr]\,dx,
\end{aligned}\end{equation*}
and, passing to a subsequence,
\begin{eqnarray*}
\int_{\mathbb R^d} \langle i\partial_{x_1}(\wt u_n+tv), \wt u_n+tv\rangle-\langle i\partial_{x_1}\wt u_n, \wt u_n\rangle \, dx&=&t\int_{\mathbb R^d} \langle i\partial_{x_1}(\wt u_n+tv), v\rangle+\langle i\partial_{x_1}v, \wt u_n\rangle \, dx\\
&\to& t\int_{\mathbb R^d} \langle i\partial_{x_1}(\wt u+tv), v\rangle+\langle i\partial_{x_1}v, \wt u\rangle \, dx.
\end{eqnarray*}
Moreover
\begin{eqnarray*}
\int_{\mathbb R^d} |\partial_{x_1}(\wt u_n+tv)|^2-|\partial_{x_1}\wt u_n|^2 \, dx&=&t\int_{\mathbb R^d} t|\partial_{x_1}v|^2+2\Re(\partial_{x_1} \wt u_n \partial_{x_1} v) \, dx\\
&\to& t\int_{\mathbb R^d} t|\partial_{x_1}v|^2+2\Re(\partial_{x_1} \wt u \partial_{x_1} v) \, dx.
\end{eqnarray*}
In view of (F1) and (F2), note that there is a constant $C>$ such that
$$|F(|1+s|^2)||1+s|\leq C\big(||1+s|-1|+|s|^{2^*-1}\big)$$
for $s\in\C$.  Then
by the mean value theorem, there is $\theta \in [0,1]$ such that
\[
\begin{aligned}
\big|V(|1+\wt u_n+tv|^2) - V(|1+\wt u_n|^2)\big| &\leq  2 \left| F(|1+\wt u_n+\theta tv|^2) \right|
|1+\wt u_n+\theta tv||tv|\\
&\lesssim \big(\big||1+\wt u_n+\theta tv|-1\big||tv|+|\wt u_n+\theta tv|^{2^*-1}|tv|\big)
\\
&\lesssim \big(\big||1+\wt u_n|-1\big||tv|+|tv|^2+(|\wt u_n|+|tv|)^{2^*-1}|tv|\big).
\end{aligned}
\] 
Since $(|1+\wt u_n|-1) \subset L^2(\R^d)$, $(\wt u_n) \subset L^{2^*}(\R^d)$ are bounded,
 we infer that the family $$\left\{ V(|1+\wt u_n+tv|^2) - V(|1+\wt u_n|^2) \right\}$$ is 
 uniformly integrable and by the Vitali's convergence theorem 
 \begin{align*}
 \int_{\R^d}V(|1+\wt u_n+tv|^2) - V(|1+\wt u_n|^2)\, dx &= \int_{\supp v} V(|1+\wt u_n+tv|^2) - V(|1+\wt u_n|^2)\, dx \\
 &\to \int_{\R^d}V(|1+\wt u+tv|^2) - V(|1+\wt u|^2)\, dx.
 \end{align*}
 Therefore
 $$\lim_{n\to\infty}\big(L(\wt u_n+tv)-L(\wt u_n)\big)=L(\wt u+tv)-L(\wt u).$$
Note that $(L(\wt u_n))$ is bounded since $( \wt u_n)\subset\cM$ and
$$E_c(\wt u_n)=\frac{1}{d-1}\int_{\mathbb R^d}|\nabla_{y} \wt u_n|^2\,dx=\frac{2}{d-3}L(\wt u_n).$$
Passing to a subsequence, let us define $A:=\lim_{n\to\infty} L(\wt u_n)$, hence $$\lim_{n\to\infty} L(\wt u_n+tv)=A+L(\wt u+tv)-L(\wt u).$$ Moreover 
$$\begin{aligned}
\lim_{n\to\infty}\frac{1}{t}&\biggl[L(\wt u_n+tv)^{(d-3)/(d-1)}-L(\wt u_n)^{(d-3)/(d-1)}\biggr]\\
&=\frac{1}{t}\biggl[(A+L(\wt u+tv)-L(\wt u))^{(d-3)/(d-1)}-A^{(d-3)/(d-1)}\biggr]\end{aligned}$$
Now we can pass to the limit for $t\to 0$, since $L$ is differentiable in the $v$-direction, and we  obtain
\begin{equation}\label{lund3}\begin{aligned}
\lim_{t\to0}&\lim_{n\to\infty}\frac{1}{t}\biggl[L(\wt u_n+tv)^{(d-3)/(d-1)}-L(\wt u_n)^{(d-3)/(d-1)}\biggr]\\
&=\frac{d-3}{d-1}(A+L(\wt u+tv)-L(\wt u))^{-2/(d-1)}(A+L(\wt u+tv)-L(\wt u))'\mid_{t=0}\\
&=\frac{d-3}{d-1}A^{-2/(d-1)} L'(\wt u)v.
\end{aligned}\end{equation}
Since $(\wt u_n)\subset\mathcal{M}$ and $(\wt u_n+tv)\subset\mathcal{U}$, we can apply Proposition \ref{sob} with $u=\wt u_n+tv$ and $u=\wt u_n$, so that we get
$$\frac{1}{t}\int_{\mathbb R^d} |\nabla_{y}(\wt u_n+tv)|^2-|\nabla_{y}\wt u_n|^2\,dx\ge \frac{1}{t} K_0 T_c^{2/(d-1)}\biggl[L(\wt u_n+tv)^{(d-3)/(d-1)}-L(\wt u_n)^{(d-3)/(d-1)}+o_n(1)\biggr]$$
Now, letting $n\to\infty$, and then letting $t\to 0$, in view of \eqref{lund3} we get
\begin{equation}\label{lim1}
\lim_{t\to0}\lim_{n\to\infty}\frac{1}{t}\int_{\mathbb R^d} |\nabla_{y}(\wt u_n+tv)|^2-|\nabla_{y}\wt u_n|^2\,dx\ge \frac{d-3}{d-1}K_0 T_c^{2/(d-1)}A^{-2/(d-1)} L'(\wt u)v.
\end{equation}
On the other hand, 
\begin{equation}\label{lim2}\begin{aligned}
\lim_{t\to0}&\lim_{n\to\infty} \frac{1}{t}\int_{\mathbb R^d}  |\nabla_{y}(\wt u_n+tv)|^2-|\nabla_{y}\wt u_n|^2\,dx\\
&=
\lim_{t\to0}\lim_{n\to\infty} \frac{1}{t}\int_{\mathbb R^d} 2\langle\nabla_{y} \wt u_n,\nabla_{y} tv\rangle+t^2|\nabla_{y}v|^2\,dx\\
&=\lim_{t\to0}\int_{\mathbb R^d} 2\langle\nabla_{y} \wt u,\nabla_{y} tv\rangle+t^2|\nabla_{y}v|^2\,dx=\int_{\mathbb R^d} 2\langle\nabla_{y} \wt u,\nabla_{y} v\rangle\,dx.
\end{aligned}\end{equation}
Using \eqref{lim1}, \eqref{lim2} and recalling the definitions of $A$ and $K_0$, we finally get
$$2\int_{\mathbb R^d} \langle\nabla_{y} \wt u,\nabla_{y} v\rangle\,dx\ge \frac{d-3}{d-1}K_0 T_c^{2/(d-1)}A^{-2/(d-1)} L'(\wt u)v=L'(\wt u)v.$$
Since $v$ is arbitrarily,  we get
$$2\int_{\mathbb R^d} \langle\nabla_{y} \wt u,\nabla_{y} v\rangle\,dx=L'(\wt u)v$$
for any $v\in\cC_0^{\infty}(\R^d)$,
which means that $\wt u$ is a weak solution to \eqref{eq} and (c) is proved.

We will show that $\wt u \in W^{2,q}_{\mathrm{loc}} (\R^d)$ for every $q < \infty$. Indeed, note that since $\wt u$ is a weak solution to \eqref{eq}, following \cite[Lemma 5.5]{maris} and introducing $\wt w := e^{\frac{-i c x_1}{2}} (1 + \wt u)$ we find that $\wt w \in H^1_{\mathrm{loc}} (\R^d)$ is a distributional solution to
\begin{align*}
-\Delta \wt w - \frac{c^4}{4} \wt w = F \left( |\wt w|^2 \right).
\end{align*}
From Brezis-Kato's result \cite{BrezisKato}, \cite[Lemma B.3]{Struwe}, we obtain that $\wt w \in W^{2,q}_{\mathrm{loc}} (\R^d)$ for every $q < \infty$. Then also $\wt u \in W^{2,q}_{\mathrm{loc}} (\R^d)$. This regularity is enough to show that $\wt u$ satisfies the Poho\v{z}aev identity, namely $\wt u \in \cM$.

Note that, from the proof above, it follows that $T_c$ and $S_c^{(d-3)/(d-1)}$ are attained, namely $T_c=E_c(\wt u)$ and $\int_{\R^d}|\nabla_y \wt u|^2 \, dx=S_c^{(d-3)/(d-1)}$. 
Indeed, since $(\wt u_n)\subset\mathcal{M}$ is a minimizing sequence for $T_c$, by the weak convergence $\wt u_n\rightharpoonup \wt u$ in $\cD^{1,2}(\mathbb R^d)$, we get
$$E_c(\wt u)=\frac{1}{d-1}\int_{\mathbb R^d}|\nabla_{y}\wt u|^2\,dx\le E_c(\wt u_n)\to T_c.$$
Moreover by the Poho\v{z}aev identity, $\wt u\in \cM$ and $$T_c\le E_c(\wt u),$$
thus the equality holds and $E_c(\wt u)=T_c$, so that also $\int_{\R^d}|\nabla_y \wt u|^2 \, dx=S_c^{(d-3)/(d-1)}$ and $L(\wt u)=-1$. Hence also $u_n\weakto \tu$.

 Finally, observe that if $u$ is a minimizer of \eqref{eq:ineq}, then by the above considerations, we get that $\tu:=m_{\cM}(u)$ is a solution and we complete the proof. 
\end{altproof}

\begin{remark}\label{rem:final}
Observe that in the proof of Theorem~\ref{thm:1}, we do not require any regularity properties and the bootstrap argument to solve~\eqref{eq}, which appears to be difficult to carry out under the general growth condition~(F2). However, we note that the solutions must belong to \( W^{2,q}_{\mathrm{loc}}(\mathbb{R}^d) \) for any \( q < \infty \) in view of Brezis-Kato's theorem, and this fact is only crucial for proving that a solution lies in \( \mathcal{M} \).
\end{remark}

\section*{Acknowledgments}
L. Baldelli is member of the {\em Gruppo Nazionale per l'Analisi Ma\-te\-ma\-ti\-ca, la Probabilit\`a e le loro Applicazioni}
(GNAMPA) of the {\em Istituto Nazionale di Alta Matematica} (INdAM).
L. Baldelli was partly supported by INdAM-GNAMPA Project 2023 titled {\em Problemi ellittici e parabolici con termini di reazione singolari e convettivi} (E53C22001930001), by National Science Centre, Poland (Grant No. 2020/37/B/ST1/02742) and by the ``Maria de Maeztu'' Excellence Unit IMAG, reference CEX2020-001105-M, funded by MCIN/AEI/10.13039/501100011033/. J. Mederski was partly supported by National Science Centre, Poland (Grant No. 2020/37/B/ST1/02742). B. Bieganowski was partly supported by National Science Centre, Poland (Grant No. 2022/47/D/ST1/00487).


\begin{thebibliography}{55}
	
\bibitem{Abitetal} M. Abid, C. Huepe, S. Metens, C. Nore, C. T. Pham, L. S. Tuckerman, and M. E. Brachet: {\em Gross–Pitaevskii dynamics of Bose–Einstein condensates and superfluid turbulence}, Fluid Dynam. Res. {\bf 33}:5-6 (2003), 509--544.

\bibitem{br} J. Bellazzini, D. Ruiz: \textit{Finite energy traveling waves for the Gross-Pitaevskii equation in the subsonic regime}, Amer. J. Math. {\bf 145}:1 (2023), 109--149.

\bibitem{BerLions} H. Berestycki, P.L. Lions: {\em Nonlinear scalar field equations. I - existence of a ground state}, Arch. Ration. Mech. Anal. {\bf 82} (1983), 313--345.

\bibitem{BethuelSautI} F. B\'ethuel, J.-C. Saut: {\em Travelling waves for the Gross–Pitaevskii equation, I}, Ann. Inst. H. Poincaré Phys. Théor. {\bf 70}:2 (1999), 147--238.

\bibitem{BethuelSautII} F. B\'ethuel, P. Gravejat, J.-C. Saut: {\em Travelling waves for the Gross–Pitaevskii equation, II}, Comm. Math. Phys. {\bf 285}:2 (2009), 567--651.

\bibitem{bos} F. B\'ethuel, G. Orlandi, D. Smets: \textit{Vortex rings for the Gross-Pitaevskii equation}, J. Eur. Math. Soc. \textbf{6} (2004), 17--94.

\bibitem{BrezisKato} H. Brezis, T. Kato: \textit{Remarks on the Schr\"odinger operator with singular complex potentials}, J. Math. Pures Appl. (9) {\bf 58} (1979), no.~2, 137--151.

\bibitem{BrezisLieb} H. Brezis, E.H. Lieb: \textit{Minimum Action Solutions of Some Vector Field Equations},  Comm. Math. Phys. \textbf{96}(1), (1984), 97--113. 

\bibitem{ChironMaris} D. Chiron, M. Maris: {\em Traveling waves for nonlinear Schr\"odinger equations with nonzero conditions at infinity}, Arch. Ration. Mech. Anal. {\bf 226}:1 (2017), 143--242.

\bibitem{Gravejat} P. Gravejat: \textit{A non-existence result for supersonic travelling waves in the Gross-Pitaevskii equation}, Comm. Math. Phys. \textbf{243} (2003).

\bibitem{Gerard} P. Gérard: {\em The Cauchy problem for the Gross-Pitaevskii equation}, Ann. Inst. H. Poincaré C Anal. Non Linéaire {\bf 23} (2006), no. 5, 765--779.

\bibitem{jpr} C.A. Jones, S.J. Putterman, P.H. Roberts:
\textit{Motions in a Bose condensate V. Stability of solitary wave solutions of nonlinear Schr\"odinger equations in two and three dimensions}, J. Phys. A, Math. Gen. \textbf{19} (1986), 2991--3011.

\bibitem{Kivshar} Y.S. Kivshar, D. Luther-Davies: {\em Dark optical solitons: physics and applications}, Phys. Rep. {\bf 298}, (1998), 81--197.

\bibitem{Lieb} E.H. Lieb: \textit{On the lowest eigenvalue of the Laplacian for the intersection of two domains}, Invent. Math. \textbf{74} (1983), 441--448.

\bibitem{LionsAnn1} P.L. Lions: \textit{The concentration-compacteness principle in the calculus of variations. The locally compact case. I}, Ann. Inst. H. Poincar\'e Anal. Non Lin\'eaire \textbf{1} (1984), 109--145.

\bibitem{marisNonEx} M. Mari\c{s}:
\textit{Nonexistence of supersonic traveling waves for nonlinear Schr\"odinger equations with nonzero conditions at infinity}, SIAM J. Math. Anal. \textbf{40} (2008), 1076--1103.

\bibitem{maris} M. Mari\c{s}: \textit{Traveling waves for nonlinear Schr\"{o}dinger equations with nonzero conditions at infinity}, Annals of Mathematics \textbf{178} (2013), 107--182.

\bibitem{marisMur} M. Mari\c{s}, A. Mur: {\em Periodic traveling waves for nonlinear Schrödinger equations with non-zero conditions at infinity in $\R^2$}, Calc. Var. \textbf{64}, 90 (2025).

\bibitem{Struwe} M. Struwe: \textit{Variational Methods}, Springer, Berlin, 2008.

\end{thebibliography}
\end{document}